\numberwithin{equation}{section}
\newtheorem{theorem}{Theorem}[section]
\newtheorem{lemma}[theorem]{Lemma}
\newtheorem{proposition}[theorem]{Proposition}
\newtheorem{corollary}[theorem]{Corollary}
\newtheorem{definition}[theorem]{Definition}
\newtheorem{example}[theorem]{Example}
\begin{document}
\baselineskip 15 truept
\title{ Strong zero-divisor graph of p.q.-Baer $*$-rings }	\date{}

\author{Anil Khairnar}
	
\address{ \textit{Department of Mathematics, Abasaheb Garware College, Pune-411004, India.\\ anil\_maths2004@yahoo.com}}
	
\author{Nana Kumbhar}
	
\address{\textit{Department of Mathematics and Statistics, Yashwantrao Mohite College, Bharati Vidyapeeth (Deemed to be University), Pune-411038, India.\\ kumbharnana03@gmail.com}}
	
\author{ B. N. Waphare }
\address{\textit{Department of Mathematics, Savitribai Phule Pune University, Pune-411007, India. \\ waphare@yahoo.com}}
	
\maketitle {\bf \small Abstract:}{ In this paper, we study the strong zero-divisor graph of a  p.q.-Baer $*$-ring. We determine the condition on a p.q.-Baer $*$-ring (in terms of the smallest central projection in a lattice of central projections of a $*$-ring), so that its strong zero-divisor graph contains a cut vertex. It is proved that the set of cut vertices of a strong zero-divisor graph of a p.q.-Baer $*$-ring forms a complete subgraph. We prove that the complement of the strong zero-divisor graph of a p.q.-Baer $*$-ring is connected if and only if the $*$-ring contains at least six central projections. We characterize the diameter and girth of the complement of a strong zero-divisor graph of a p.q.-Baer $*$-ring. Also, we characterize p.q.-Baer $*$-rings whose strong zero-divisor graph is complemented.}\\
	\noindent {\bf Keywords:} $*$-ring, p.q.-Baer $*$-ring, central projections, zero-divisor graph, complement of the graph.
	
	\noindent {\textbf{Mathematics Subject Classification:}} 16W10, 05C25, 13A70, 05C15. 
	\section{Introduction}

A $\ast-ring$ $R$ is a ring with an involution $x \rightarrow x^*$, that is, an additive anti-automorphism of the period at most two. An element $e$ of
	a $*$-ring is a \textit{projection} if it is idempotent  (\textit{i.e.} $e^2=e$)  and self adjoint (\textit{i.e.} $e^*=e$). We write $r_R(B)=\{x\in R\colon bx=0,\forall~ b\in B\}$ and is called the \textit{right annihilator} of $B$ in $R$, and  $l_R(B)=\{x\in R\colon xb=0,\forall~ b\in B\}$ and is called the \textit{left annihilator} of $B$
	in $R$. A $*-ring$ $R$ is said to be a \textit{Baer $*$-ring}, if for any $B\subset R$, $r_R(B)=eR$, where $e$ is a projection in $R$. A $*-ring$ $R$ is said to be a \textit{Rickart $*$-ring}, if for any $x \in R$, $r_R(x)=eR$, where $e$ is a projection in $R$. 
	For each element $a$ in a Rickart $*$-ring, there is a unique projection $e$ such that $ae=a$ and $ax=0$ if and only if $ex=0$, called a {\it right projection} of $a$ denoted by $RP(a)$. Similarly, left
	projection $LP(a)$ is defined for each element $a$ in a Rickart $*$-ring $R$. In fact, $r_R(a)=(1-RP(a))R$. A subring $B$ of a $*$-ring $R$ is a \textit{$*$-subring} if $x\in B$ implies $x^*\in B$. A projection in a $*$-ring is a \textit{central projection} if it belongs to the centre of the ring. More details about Baer $*$-ring and  Rickart $*$-rings can be found in  Berberian \cite{skb}.\

A $*$-ring $R$ is said to be \textit{quasi Baer $*$-ring} \cite{Bir2}, if the right annihilator of every ideal of $R$ is generated by a projection in $R$. A $*$-ring $R$ is said to be a \textit{p.q.-Baer $*$-ring} \cite{Bir4}, if for every principal right ideal $aR$ of $R$, $r_R(aR)=eR$, where $e$ is a projection in $R$.  In \cite{nkbn2}, Waphare and Thakare proved that, $M_n(Z_m)$ is Baer $*$-ring for $n \geq 2$ if and only if $n=2$ and $m$ is square free integer whose every prime factor is of the form $4k+3 $. So, in general $M_n(R)$ need not be Baer $*$-ring for Baer $*$-ring $R$. It observed that, $R$ is quasi-Baer $*$-ring if and only if $M_n(R)$ ($n\times n $ matrix ring with $*$-transpose as involution) is a quasi-Baer $*$-ring (hence a p.q.-Baer $*$-ring) for all $n\geq 1$ \cite[Proposition 2.6]{Bir2}. Hence the class of p.q.-Baer $*$-rings is much larger than the class of Baer $*$-rings. Let $R$ be a $*$-ring, $a\in R$, we say that $a$ possesses a \textit{central cover} if there exists a smallest central projection $h$ such that $ha=a$, we denote central cover of $a$ by $C(a)$.\

Graph with no loops and no multiple edges is called a \textit{simple graph}. Graph $G$ is said to be \textit{complete} if there is an edge between any two distinct vertices. If any two distinct vertices are joined by a path then $G$ is called a \textit{connected graph}. If $x$ and $y$ are two vertices of $G$, $d(x, y)$ denoted the length of the shortest path joining $x$ and $y$. The \textit{diameter} of $G$ is defined as $diam(G)= sup \{d(x,y)~|~x ~and~ y ~are~ vertices~ of ~G \}$. The \textit{girth} of $G$ is the length of the shortest cycle of $G$ and it is denoted by \textit{gr(G)}. Vertex $a$ of a graph $G$ is called a \textit{cut vertex} if the removal of $a$ and all edges incident on a creates a graph with more connected components than $G$.
	We say that $G$ is \textit{split} into two subgraphs $X$ and $Y$ via $a$ if (i) $|V(X)|\geq 2$ and $|V(Y)|\geq 2$, (ii) $X \cup Y = G$, (iii) $V(X)\cap V(Y) =\{a\} $ (iv) $a$ is cut vertex of $G$, and (v) $x\in V(X)\backslash \{a\}$ and $y \in V(Y)\backslash \{a\}$, then $x$ and $y$ are not adjacent. For more about graph theoretic concepts, see \cite{dw}.

Relating one structure to another and studying the interplay between them is an interesting area of research. Researchers are attracted to this area as one can solve problems in one structure using another. In \cite{Abrams}, G. Abrams and G. A. Pion, were introduced the Leavitt path algebra of a graph as the algebraic counterpart of the graph $C^*$- algebra and as generalizations of Leavitt algebras. The ideas or process in commutative rings becomes quite difficult for non-commutative rings, in particular $*$-rings. Giving examples in $*$-rings with certain properties is a difficult task. In \cite{Lia}, R. Hazrat and L. Va\v{s} characterize Leavitt path algebras, which are Rickart, Baer and Baer $*$-rings in terms of the properties of the underlying graph.

Researchers relates a graph to  an algebraic structure. The zero-divisor graph of a commutative ring was introduced by Beck in \cite{B}. The {\it chromatic number} $\chi(G)$ of a graph $G$ is the minimum number of colors that can be assigned to the vertices of $G$ in such a way that any two adjacent vertices have different colors. A subset $S$ of the vertex set $V(G)$ of $G$ is called a {\it clique} if any two distinct vertices of $S$ are adjacent; the number $\omega(G)$ is the least upper bound of the size of the cliques in $G$. Beck conjectured that for commutative ring $R$,   $\chi(\Gamma(R))=\omega(\Gamma(R))$, if $\omega(\Gamma(R))< \infty$. In \cite{ddn},  D.D. Anderson and Naseer, gave a counterexample of a commutative ring for which the conjecture is not true. D.F. Anderson and Livingston, redefined the zero-divisor graph in \cite{DP}, and proved such a graph is connected and diameter less equal $3$. Properties of $\Gamma(R)$ which are in terms of connectedness, diameter, girth, etc., are investigated by different authors see \cite{saam1,saam,DP}.  In \cite{spr}, Redmond generalized the concept of the zero-divisor graph to non-commutative rings. Associate directed graph $\Gamma(G)$ for non-commutative ring $R$, its vertices are all non zero zero-divisors of $R$ in which for any two distinct vertices $x$ and $y$, $x \rightarrow y$ is an edge if and only if $xy = 0$. In \cite{saam}, Akbari and Mohammadin associate simple undirected graph $\bar{\Gamma}(R)$ for non-commutative ring $R$, in which two vertices $x$ and $y$ are adjacent if and only if $x \neq y$ and either $xy = 0$ or $yx = 0$. In \cite{Behboodi1}, M. Behboodi \textit{et al.}, introduced \textit{strong zero-divisor}, an element $a\in R$ is called strong zero-divisor in a ring $R$ if  $\langle a \rangle \langle b\rangle =0$ or $\langle b \rangle \langle a \rangle =0$  for some nonzero element $b\in R$. The set of all \textit{strong zero-divisors} in a ring $R$ is denoted by $S(R)$. In \cite{Behboodi2}, M. Behboodi \textit{et al.} associated a graph $\tilde{\Gamma}(R)$ to a ring $R$  whose vertices are  $S(R)^*=S(R)\backslash\{0\}$, where distinct vertices $a$ and $b$ are adjacent if and only if either $\langle a \rangle \langle b\rangle =0$ or $\langle b \rangle \langle a \rangle =0$. This graph is called the \textit{strong zero-divisor graph} of a ring $R$.  In the same paper, they studied the interplay between the ring theoretic properties of $R$ and the graph theoretic properties of $\tilde{\Gamma}(R)$. The zero-divisor graph of a ring is generalized to the \textit{$k$-zero-divisor hypergraph} of a ring $R$, for $k \in \mathbb{N}$, which is denoted by $H_k(R)$ by T. Asir \textit{et al.} see, \cite{Asir}. They determined the diameter and the girth of $H_k(R)$, whenever $R$ is reduced ring.

	In \cite{Pat1}, Patil and Waphare associated a simple undirected graph to a $*$-ring $R$ whose vertices are the nonzero left zero-divisors, {\it i.e.}, $\{0\neq a\in R~|~ ab=0,\textnormal{ for some nonzero } b\in R\}$ and two distinct vertices $a$ and $b$ are adjacent if and only if $ab^*=0$. This graph is called the {\it zero-divisor graph} of the $*$-ring $R$ and denote it by $\Gamma^*(R)$. They investigate properties of $\Gamma^*(R)$ which are in terms of connectedness, diameter and girth. Also they proved Beck's conjecture is true for $\Gamma^*(R)$.  In \cite{Pat2}, Patil and Waphare proved that the set of cut vertices forms a complete subgraph. Also, characterize Rickart $*$-rings for which the complement of the $\Gamma^*(R)$ is connected. Axtell \textit{et al.} \cite{Axtell1} determine when the zero-divisors form an ideal in a finite commutative ring with identity. Axtell \textit{et al.} \cite{Axtell2}  and S.P. Redmond  \cite{spr2}, examined and characterized the existence of cut vertex in the zero-divisor graph of commutative ring. In \cite{spr2}, S.P. Redmond proved that the set of cut vertices of the zero-divisor graph of a commutative ring forms a complete subgraph.  Patil and Waphare \cite{Pat2}, proved analogous results for zero-divisor graph of Rickart $*$-rings.  Vishweswaran \cite{Visweswaran1}, studied the complement and connectedness of complement of the zero-divisor graph of commutative rings.
	
	In \cite{NMK1}, authors introduced the \textit{strong zero-divisor graph} for a $*$-ring $R$. Associate  a simple undirected graph to a $*$-ring $R$ whose vertex set is   $V(\Gamma_s^*(R))=\{0\neq a\in R$ $|$ $r_R(aR)\neq \{0\} \}$ and two distinct vertices $a$ and $b$ are adjacent  if and only if $aRb^*=0 $. This graph is called the  \textit{strong zero-divisor graph} of a $*$-ring $R$ and denoted by $\Gamma_s^*(R)$. In general $\Gamma_s^*(R)$ is not connected. In the same paper, authors studied the girth, diameter, and connectedness of $\Gamma_s^*(R)$.  It is proved that for a p.q.-Baer $*$-ring $R$, $\Gamma_s^*(R)$ is uniquely complemented.
	
	In this paper, we continue the study of a strong zero-divisor graph of rings with involution. 
	Axtell \textit{et al.} \cite{Axtell1} examined the cut vertex in the sense of zero-divisor graphs. In the second section, we study the existence of a cut vertex in the $\Gamma_s^*(R)$, when $R$ is a $*$-ring. This extends the results of Axtell \textit{et al.} \cite{Axtell2} to $*$-ring. In section three, we prove that the set of cut vertices of $\Gamma_s^*(R)$, forms a complete subgraph when $R$ is a p.q.-Baer $*$-ring. In the fourth section, we study the complement of a strong zero-divisor graph of p.q.-Baer $*$ ring.  We prove that the complement of the strong zero-divisor graph of a p.q.-Baer $*$-ring is connected if and only if the ring contains at least six central projections. Also, we characterize the diameter and girth of the complement of strong zero-divisor graph of a p.q.-Baer $*$-rings.

	\section{Strong zero-divisor graph of $*$-rings}
	The strong zero-divisors for a $*$-ring were introduced by authors in \cite{NMK1}. In this section, we examine cut vertices of $\Gamma_s^*(R)$. Henceforth we assume that $V(\Gamma^*_s(R))\neq \emptyset$, call $\Gamma^*_s(R)$ the  \textit{strong zero-divisor graph of a $*$-ring} $R$ and  $x \sim y$ means $x, y$ are adjacent.
	
	\begin{definition} [{\cite[Definition 2.1]{NMK1}}] \label{def1} Let $R$ be a $*$-ring. We associate a simple undirected graph $\Gamma_s^*(R)$ to $R$ whose vertex set is  $V(\Gamma_s^*(R))=\{0\neq a \in R$ $|$ $r_R(aR)\neq \{0\} \}$ and two distinct vertices $a$ and $b$ are adjacent if and only if $aRb^*=0$. 
	\end{definition}

	\begin{example}
		Let $R=M_2(\mathbb{Z}_{6})$, and let 
		$a=\left[
		\begin{array}{ll}
			0 & 2 \\
			0 & 2\\
		\end{array}
		\right]$,
		$b=\left[\begin{array}{ll}
			2 & 0 \\
			2 & 0\\
		\end{array}
		\right]$,
		$c=\left[
		\begin{array}{ll}
			3 & 3 \\
			3 & 3\\
		\end{array}
		\right]  \in R$. Let $*$ be the transpose involution on $R$. Clearly, $aRc^*=0$ and $bRc^*=0$ i.e. $a, b\in V(\Gamma^*_s(R))$. On the other hand $ab^*=0$, so $a, b\in V(\Gamma^*(R))$ but $aRb^*\neq 0$, and hence, the vertex $a$ and $b$ are adjacent in $ \Gamma^*(R)$ but not in $\Gamma^*_s(R)$.
	\end{example}

	\begin{example}
		Let $R=M_2(\mathbb{Z}_{6})$, and let 
		$a=\left[
		\begin{array}{ll}
			0 & 2 \\
			0 & 2\\
		\end{array}
		\right]$,
		$b=\left[
		\begin{array}{ll}
			2 & 2 \\
			0 & 0\\
		\end{array}
		\right]$,
		$c=\left[
		\begin{array}{ll}
			3 & 3 \\
			3 & 3\\
		\end{array}
		\right]  \in R$. Let $*$ be the transpose involution on $R$. Clearly, $aRc^*=0$ and $bRc^*=0$ i.e. $a, b\in V(\Gamma^*_s(R))$. On the other hand $ab=0$, so $a, b\in V(\bar{\Gamma}(R))$ but $aRb^*\neq 0$, and hence, the vertex $a$ and $b$ are adjacent in $\bar{\Gamma}(R)$ but not in $\Gamma^*_s(R)$.
	\end{example}
	From the above examples, it is clear that the general graph $\Gamma^*_s(R)$ is not an induced subgraph of $\bar{\Gamma}(R)$ and  $\Gamma^*(R)$.

	\begin{theorem} [{\cite[Theorem 2.1]{NMK1}}]{\label{th1}} Let $R_1$, $R_2$ be two $*$-rings with  $V(\Gamma^*_s(R_1))\ne \emptyset$ and $V(\Gamma^*_s(R_2))\neq \emptyset$. Then $A=R_1\oplus R_2$ is a $*$-ring with a componentwise involution such that $\Gamma^*_s(A)$ is connected and $diam(\Gamma^*_s(A))\leq 4$.
	\end{theorem}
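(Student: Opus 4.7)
The plan is to reduce all connectivity questions to the \emph{pure} vertices of $A$, namely those of the form $(a_1,0)$ or $(0,a_2)$, and exploit the observation that any two pure vertices of opposite type are automatically adjacent.

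First I would record the computation $r_A((x_1,x_2)A) = r_{R_1}(x_1 R_1) \oplus r_{R_2}(x_2 R_2)$, which follows immediately from the coordinatewise structure of $A$. Three consequences drop out: (i) any $(a_1,0)$ with $a_1 \neq 0$ is a vertex, since $\{0\}\oplus R_2 \subseteq r_A((a_1,0)A)$ is nonzero (and symmetrically for $(0,a_2)$); (ii) $(a_1,0)A(0,a_2)^* = 0$, so every pure vertex of one type is adjacent to every pure vertex of the other type; (iii) a mixed vertex $(x_1,x_2)$ (both coordinates nonzero) forces $r_{R_1}(x_1 R_1) \neq 0$ or $r_{R_2}(x_2 R_2) \neq 0$.

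Using (iii), I would show every mixed vertex has a pure neighbor: if $r_{R_1}(x_1 R_1) \neq 0$, choose $0 \neq w \in r_{R_1}(x_1 R_1)$ and verify that $(w^*,0)$ is a pure vertex adjacent to $(x_1,x_2)$, since $(x_1,x_2)A(w^*,0)^* = (x_1 R_1 w,\,0) = 0$; the case $r_{R_2}(x_2 R_2) \neq 0$ is symmetric.

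Finally I would assemble a path of length at most $4$ between arbitrary vertices $x,y$: replace $x$ and $y$ (at cost at most one step each) by adjacent pure vertices; if those pure vertices lie on opposite sides they are adjacent by (ii); if on the same side (say both of the form $(\ast,0)$) insert any vertex $(0,u)$ with $0 \neq u \in R_2$, whose existence is guaranteed since $V(\Gamma_s^*(R_2)) \neq \emptyset$. The bound $1 + 2 + 1 = 4$ yields connectedness and the diameter bound simultaneously. The only delicate point, requiring a little care, is to ensure that the auxiliary pure vertices introduced are distinct from $x,y$; this is automatic, because auxiliary vertices have a zero coordinate by construction whereas mixed vertices do not, and the two pure endpoints to which $x,y$ are routed can always be chosen different from each other (otherwise the path collapses to something shorter, which only helps).
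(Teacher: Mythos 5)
Your argument is correct: the coordinatewise computation of $r_A((x_1,x_2)A)$, the adjacency of pure vertices of opposite types, and the routing of every mixed vertex to a pure neighbour give a valid path of length at most $4$ between any two vertices, and the distinctness issues are handled properly. Note that the paper does not prove this statement itself but imports it from \cite[Theorem 2.1]{NMK1}; your proof is the standard argument used there (and in the analogous results for zero-divisor graphs of direct sums), so no substantive divergence to report.
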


	\begin{corollary}{\label{cor1}}
		If $A_1, A_2, A_3, \cdots A_n$ are $*$-subrings of a $*$ ring $A$ such that $A=A_1\oplus A_2 \oplus \cdots \oplus A_n$, then $\Gamma_s^*(A)$ is connected.
	\end{corollary}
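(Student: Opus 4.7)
The plan is to induct on $n$, using Theorem~\ref{th1} as both the base case and the main tool in the inductive step. For $n = 2$ the statement is exactly Theorem~\ref{th1}, so I would assume the result for direct sums of $n - 1$ $*$-subrings and consider $A = A_1 \oplus \cdots \oplus A_n$ with $n \geq 3$. I would set $B = A_1 \oplus \cdots \oplus A_{n-1}$, regard it as a $*$-subring of $A$ (the involution restricts because each $A_i$ is a $*$-subring, hence $B^* \subseteq B$), and rewrite $A = B \oplus A_n$ as a two-term direct sum of $*$-rings. By the induction hypothesis $\Gamma^*_s(B)$ is connected, so in particular $V(\Gamma^*_s(B)) \neq \emptyset$; if also $V(\Gamma^*_s(A_n)) \neq \emptyset$, then Theorem~\ref{th1} applied to $A = B \oplus A_n$ gives connectedness of $\Gamma^*_s(A)$ at once.

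To cover the remaining case $V(\Gamma^*_s(A_n)) = \emptyset$ (i.e., $A_n$ has no internal strong zero-divisors), I would argue directly from the ideal structure $B \cdot A_n = A_n \cdot B = 0$ of the internal direct sum. For any $0 \neq a \in A_n$ one has $aA = aA_n \subseteq A_n$, and for any $0 \neq b \in B$, since $b^* \in B$, one gets $aAb^* \subseteq A_n \cdot B = 0$. Hence every nonzero element of $A_n$ lies in $V(\Gamma^*_s(A))$ and is adjacent in $\Gamma^*_s(A)$ to every vertex of $\Gamma^*_s(B)$; combining this with the already-established connectedness of $\Gamma^*_s(B)$ glues everything into one connected graph. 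A symmetric remark handles any fringe case at the other summand.

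The main obstacle is precisely this bookkeeping around the hypotheses of Theorem~\ref{th1} during the induction: the result has to be carried through even when one of the two summands being combined fails to contribute internal strong zero-divisors of its own. Once one observes that the direct-sum ideal structure automatically forces cross-summand adjacencies in $\Gamma^*_s(A)$ (using that each $A_i$ is closed under the involution), this obstacle is mild, and the induction closes cleanly.
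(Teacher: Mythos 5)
The paper offers no written proof of this corollary; the intended derivation is clearly the one you start with, namely grouping the summands into two blocks and invoking Theorem~\ref{th1}, so your overall route is the same as the paper's. Your extra concern about the hypotheses of Theorem~\ref{th1} is well placed: the theorem requires $V(\Gamma^*_s(R_1))\neq\emptyset$ and $V(\Gamma^*_s(R_2))\neq\emptyset$, and a single summand (e.g.\ a domain) can fail this, a point the paper silently skips. (Two stylistic remarks: the induction is not really needed, since for $n\geq 3$ the block $B=A_1\oplus\cdots\oplus A_{n-1}$ automatically has nonempty vertex set, so one two-block grouping suffices; and the base case $n=2$ is \emph{not} literally Theorem~\ref{th1} unless both $V(\Gamma^*_s(A_1))$ and $V(\Gamma^*_s(A_2))$ are nonempty, so your fallback argument is needed there as well. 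Also, connectedness of $\Gamma^*_s(B)$ does not by itself give $V(\Gamma^*_s(B))\neq\emptyset$; what gives it is that $B$ is itself a nontrivial direct sum.)

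The genuine gap is in the fallback branch, in the step ``glues everything into one connected graph.'' The vertex set of $\Gamma^*_s(A)$ is $\{0\neq x\in A : r_A(xA)\neq\{0\}\}$, and this contains elements $x=x_B+x_n$ with \emph{both} components nonzero (e.g.\ $(2,1)$ in $\mathbb{Z}_4\oplus\mathbb{Z}$); such $x$ lie neither in $B$ nor in $A_n$, so your complete-bipartite adjacency between $B\setminus\{0\}$ and $A_n\setminus\{0\}$, together with connectedness of $\Gamma^*_s(B)$, says nothing about them. The fix is short: if $xAc=0$ with $c=c_B+c_n\neq 0$, then the cross terms vanish, so $x_BBc_B=0$ and $x_nA_nc_n=0$; picking a nonzero component, say $c_B\neq 0$, one gets $xA(c_B^*)^*=xAc_B=x_BBc_B=0$, so $x$ is adjacent to $c_B^*\in B\setminus\{0\}$ (which is a vertex and is distinct from $x$ since $x\notin B$), and symmetrically for $c_n$. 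Adding this sentence closes the argument; without it, the connectedness claim is only established for the induced subgraph on $(B\setminus\{0\})\cup(A_n\setminus\{0\})$, not for all of $\Gamma^*_s(A)$. Note that in the main branch Theorem~\ref{th1} already covers these mixed vertices, which is why the issue surfaces only in your fallback case.
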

	\begin{corollary}{\label{cor2}}
		Let $R$ be a $*$-ring.  If $R$ contains a nontrivial central projection, then $\Gamma _s^*(R)$ is connected.
	\end{corollary}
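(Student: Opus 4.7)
The plan is to reduce to Corollary \ref{cor1} by using the nontrivial central projection to split $R$ into a direct sum of two $*$-subrings. Let $e$ be a nontrivial central projection in $R$. First I would observe that $1-e$ is also a nontrivial central projection: it is idempotent, self-adjoint (since $1^*=1$ and $e^*=e$), central (since $e$ is), and nonzero because $e \neq 1$.

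Next I would verify that both $eR$ and $(1-e)R$ are $*$-subrings of $R$. They are two-sided ideals because $e$ and $1-e$ are central, and they are closed under involution because $(er)^* = r^*e^* = r^*e = er^* \in eR$, using $e^*=e$ together with the centrality of $e$; the same argument applies to $(1-e)R$. The direct sum decomposition $R = eR \oplus (1-e)R$ then follows from the identity $r = er + (1-e)r$ for every $r \in R$, together with orthogonality $eR \cap (1-e)R = \{0\}$: any element of the form $er = (1-e)s$ satisfies $er = e(er) = e(1-e)s = 0$.

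With the decomposition in hand, Corollary \ref{cor1} applied to $A_1 = eR$ and $A_2 = (1-e)R$ immediately gives the connectedness of $\Gamma_s^*(R)$. There is no real obstacle in this argument; the only point to keep track of is that the self-adjointness $e^* = e$ is essential for the two summands to inherit the involution from $R$, and the centrality of $e$ is essential for them to be ideals and for the cross products $eR \cdot (1-e)R$ to vanish.
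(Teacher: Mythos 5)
Your argument is correct and is exactly the intended route: the paper states this corollary without proof, but it is positioned immediately after Theorem \ref{th1} and Corollary \ref{cor1}, and the Peirce decomposition $R = eR \oplus (1-e)R$ into $*$-subrings followed by an appeal to Corollary \ref{cor1} is the standard (and evidently intended) derivation. Your verifications that $eR$ and $(1-e)R$ are $*$-closed two-sided ideals and that the sum is direct are all accurate.
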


	Axtell \textit{et al.} \cite{Axtell2} and Redmond \cite{spr2} characterized the existence of a cut vertex in the zero-divisor graph of a commutative ring.  Axtell \textit{et al.} \cite[Theorem 4.4]{Axtell1}, proved that $\{0,a\}$ is ideal in finite commutative ring $R$ if  $a$ is cut vertex in $V(\Gamma(R))$. We prove its analogous for $\Gamma^*_s(R)$ in the following theorem.
	
	\begin{theorem}{\label{thm2}}
		Let $R$ be a $*$-ring and $\Gamma^*_s(R)$ splits into two subgraphs $X$ and $Y$ via $a$. Then $\{0, a\}$ is an ideal of $R$.
	\end{theorem}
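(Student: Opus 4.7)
The plan is to combine the cut-vertex hypothesis with the associative inclusions $(ra)Rt\subseteq r(aRt)$ and $(ar)Rt\subseteq a(Rt)$, played off against a pair of neighbours of $a$ lying on opposite sides of the split.

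First I would produce the two neighbours. By condition (v) of the split, no edge of $\Gamma_{s}^{*}(R)$ joins $V(X)\setminus\{a\}$ to $V(Y)\setminus\{a\}$, so every component of $\Gamma_{s}^{*}(R)-a$ lies entirely in $V(X)\setminus\{a\}$ or entirely in $V(Y)\setminus\{a\}$. Since $a$ is a cut vertex, some path through $a$ joins a vertex of $V(X)\setminus\{a\}$ to one of $V(Y)\setminus\{a\}$; the two neighbours of $a$ along this path give $u\in V(X)\setminus\{a\}$ and $v\in V(Y)\setminus\{a\}$ with $u\sim a$, $v\sim a$, and (by (v)) $u\not\sim v$. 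Algebraically this reads $aRu^{*}=0=aRv^{*}$ and $uRv^{*}\neq 0$.

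The key intermediate claim is: \emph{any nonzero $c\in R$ with $cRu^{*}=0=cRv^{*}$ equals $a$.} Since $u^{*}\neq 0$ sits in $r_{R}(cR)$, the element $c$ is a vertex of $\Gamma_{s}^{*}(R)$; if $c\neq a$, then $c$ lies in $V(X)\setminus\{a\}$ or in $V(Y)\setminus\{a\}$, whereupon condition (v) together with $cRv^{*}=0$ (respectively $cRu^{*}=0$) forces $c=v$ (respectively $c=u$), contradicting $V(X)\cap V(Y)=\{a\}$.

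Finally, for any $r\in R$ the associative inclusions give $(ra)Ru^{*}=r(aRu^{*})=0$ and $(ar)Ru^{*}=a(rRu^{*})\subseteq aRu^{*}=0$, and similarly with $v$ in place of $u$, so the claim yields $ra,ar\in\{0,a\}$. Applying the same reasoning to $c=a+a$, which satisfies $(a+a)Ru^{*}=(a+a)Rv^{*}=0$ by additivity, produces $a+a=0$, because $a+a=a$ is ruled out by $a\neq 0$; hence $-a=a\in\{0,a\}$ as well, and $\{0,a\}$ is closed under addition, negation, and two-sided multiplication by $R$, i.e.\ is an ideal. I expect the main obstacle to be the intermediate claim, since it is the single place where the full strength of the split hypothesis is invoked; the remaining verifications are bookkeeping with the associative inclusions.
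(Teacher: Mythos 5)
Your proof is correct and follows essentially the same route as the paper's: both arguments show that any nonzero element $c$ annihilating everything that $a$ annihilates (in the sense $cRu^{*}=0$) must equal $a$, by playing $c$ off against neighbours of $a$ lying on opposite sides of the split, and then apply this to $a+a$, $ra$ and $ar$. The only caveat is that your derivation of the neighbours $u,v$ is not literally forced by conditions (i)--(v) of the splitting definition (one can cook up a split in which all neighbours of $a$ lie in $V(X)\setminus\{a\}$), but the paper's own proof makes exactly the same tacit assumption when it picks $b\in V(Y)\setminus\{a\}$ adjacent to $a$, so this is an issue with the definition rather than with your argument.
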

	
	\begin{proof}
		Clearly $a\neq a+a$ and $aRx^*=0$ implies $(a+a)Rx^*=0$. We claim that $a+a=0$. If $a+a\neq 0$, then $a+a\in V(\Gamma^*_s(R))$. Assume that  $a+a\in V(X)\backslash\{a\}$. Let  $b\in V(Y)\backslash \{a\}$ be such that $aRb^*=0$, then $(a+a)Rb^*=0$ a contradiction to fact that $a$ is a cut vertex, therefore $a+a=0\in \{0,a\}$. If $r\in R$, then $aRc^*=0$ implies $raRc^*=0$ and $arRc^*=0$ . If $ra\neq 0$ and $ra\neq a$ then, each vertex adjacent to $a$ is also adjacent to $ra$, a contradiction to $a$ is a cut vertex. Hence $ra\in \{0,a\}$, similarly $ar\in \{0,a\}$. Therefore $\{0,a\}$ is an ideal of $R$.
	\end{proof}
	\begin{theorem}{\label{thm3}}
		Let $R$ be a $*$-ring and $\Gamma^*_s(R)$ splits into two subgraphs $X$ and $Y$ via $a$ such that $X\backslash \{a\}$ is complete subgraph, then $V(X)\cup\{0\}$ is an ideal of $R$.
	\end{theorem}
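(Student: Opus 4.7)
The plan is to set $T=V(X)\cup\{0\}$ and to prove $T$ is an ideal of $R$ by verifying closure under addition and under two-sided multiplication by arbitrary elements of $R$. The degenerate additive cases ($u=0$, $v=0$, or $u+v=0$) are immediate, and the case $u=v=a$ is handled by Theorem \ref{thm2}, which gives $2a=0$. The case when $u+v$ lands in $V(X)\cup\{0\}$ automatically is likewise trivial, so the work concentrates on ruling out $u+v\in V(Y)\setminus\{a\}$.

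For the main additive step, given distinct $u,v\in V(X)\setminus\{a\}$ with $u+v\neq 0$, I would argue by contradiction: assuming $u+v\in V(Y)\setminus\{a\}$, pick an auxiliary $w\in V(X)\setminus\{a,u,v\}$. Completeness of $X\setminus\{a\}$ forces $wRu^{*}=0$ and $wRv^{*}=0$, hence $wR(u+v)^{*}=0$, producing an edge between $w\in V(X)\setminus\{a\}$ and $u+v\in V(Y)\setminus\{a\}$ that violates split condition (v). The mixed case $u=a$, $v\in V(X)\setminus\{a\}$ runs the same way, but now $w$ must be chosen specifically as a neighbour of $a$ inside $V(X)\setminus\{a\}$ (existence follows from $a$ being a cut vertex); then $wRa^{*}=0$ combines with $wRv^{*}=0$ to give $wR(a+v)^{*}=0$. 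The sub-case $u=v$ (so $u+u=2u$) is identical, using any $w\in V(X)\setminus\{a,u\}$.

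For multiplicative absorption, take $u\in V(X)$ and $r\in R$; the case $u=a$ is covered by Theorem \ref{thm2}. For $u\in V(X)\setminus\{a\}$, pick any $v\in V(X)\setminus\{a,u\}$. Completeness gives $uRv^{*}=0$, whence $(ru)Rv^{*}=r(uRv^{*})=0$ and $(ur)Rv^{*}\subseteq uRv^{*}=0$ (since $rs$ ranges inside $R$ as $s$ does). Consequently $v^{*}\in r_{R}((ru)R)\cap r_{R}((ur)R)$, so $ru$ and $ur$, when nonzero, are genuine vertices of $\Gamma_{s}^{*}(R)$. They are adjacent to $v\in V(X)\setminus\{a\}$ whenever distinct from $v$, so split condition (v) prohibits membership in $V(Y)\setminus\{a\}$, forcing each into $V(X)\cup\{0\}$.

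The principal obstacle is the low-cardinality regime where $|V(X)\setminus\{a\}|\le 2$ and no fresh auxiliary $w$ distinct from $u,v$ is available for the additive step. In those situations I would combine Theorem \ref{thm2} (which already controls $2a$, $ra$, $ar$) with the annihilator constraints provided by the guaranteed cut-vertex neighbours of $a$ on either side of the split, and then verify the handful of remaining sums by direct computation. I note that the annihilator identity $v^{*}\in r_{R}((u+v)R)$ for any surviving $v\in V(X)\setminus\{a\}$ also automatically certifies that the candidate $u+v$ (resp.\ $ru$, $ur$) is a vertex of $\Gamma_{s}^{*}(R)$ rather than a stray non-vertex nonzero element, so no separate membership check is needed.
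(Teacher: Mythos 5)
Your core mechanism is exactly the paper's: completeness of $X\setminus\{a\}$ supplies a vertex $w\in V(X)\setminus\{a\}$ with $wRu^*=wRv^*=0$, hence $wR(u+v)^*=0$ and $wR(ru)^*=wRu^*r^*=0$, and split condition (v) then forbids anything adjacent to $w$ from lying in $V(Y)\setminus\{a\}$. The only structural difference is that the paper fixes a single $b\in V(X)\setminus\{a\}$ adjacent to $a$ and asserts $bRx^*=0$ for every $x\in V(X)\cup\{0\}$ simultaneously, whereas you insist on a fresh auxiliary $w\notin\{a,u,v\}$ for each pair; that insistence is precisely what produces the ``low-cardinality obstacle'' you flag at the end.

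That obstacle is a genuine gap, and the repair you sketch will not close it. If $V(X)=\{a,b\}$ (which the split definition allows, since it only requires $|V(X)|\geq 2$ and a one-vertex graph is vacuously complete), the only vertex of $X$ available as an auxiliary for $2b$, $a+b$, $rb$, $br$ is the cut vertex $a$ itself; but $aR(2b)^*=0$ merely makes $2b$ adjacent to $a$, and since $a\in V(X)\cap V(Y)$, condition (v) says nothing about the neighbours of $a$, so this does not force $2b$ into $V(X)\cup\{0\}$. Theorem \ref{thm2} controls only the set $\{0,a\}$ and gives no hold on sums or multiples of $b$. (For comparison, the paper's own proof silently applies $bRx^*=0$ with $x=b$, i.e., assumes $bRb^*=0$; that is exactly the step your more careful bookkeeping exposes as needing justification, so you have correctly isolated the delicate point but not supplied an argument for it.) Everything else --- the generic additive case, the absorption argument, and the observation that $wR(u+v)^*=0$ certifies, via the symmetry $wRx^*=0\Leftrightarrow xRw^*=0$, that a nonzero $u+v$ really is a vertex --- is sound and agrees with the paper.
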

	\begin{proof}
		Let $b\in X\backslash \{a\}$ such that $a$ and $b$ are adjacent. As $X\backslash \{a\}$ is complete, for any $x, y\in V(X) \cup \{0\}$, $bRx^*=0$ and $bRy^*=0$. So, $bR(x+y)^*=bR(x^*+y^*)=bRx^* +bRy^*=0$ and  hence $x + y\in V(X)\cup \{0\}$. If $r\in R$ then $bR(rx)^*=bR(x^*r^*)=0$, which gives $rx\in V(X)\cup \{0\}$. Also, if $r\in R$ then $bR(xr)^*=bR(r^*x^*)=0$, which gives $xr\in V(X)\cup \{0\}$. Hence $V(X)\cup \{0\}$ is an ideal of $R$.
	\end{proof}
	Converse of Theorem \ref{thm3}, is not true. Let $R= \mathbb{Z}_2 \times \mathbb{Z}_4$ be the $*$-ring with identity mapping as involution. Here $X=\{(1,0),(0,2),(1,2)\}$, $Y= (0,1),(1,0),(0,3)$ and $(1,0)$ is cut vertex and $ V(X)\cup \{(0,0)\}=\{(0,0),(1,0),(0, 2),(1,2)\}$ is ideal of $R$  but $X$ is not complete subgraph.

	\begin{center}
		\begin{tikzpicture}[scale=.6]     
			
			\draw (0,4)--(2,2)--(4,2)--(6,2);
			\draw (0,0)--(2,2);
			\draw[fill=black](0,4) circle(.1);
			\draw[fill=black](0,0) circle(.1);
			\draw[fill=black](2,2) circle(.1);
			\draw[fill=black](4,2) circle(.1);
			\draw[fill=black](6,2) circle(.1);
			\node [left ] at(0,4) {$(0,1)$};
			\node [left ] at(0,0) {$(0,3)$};
			\node [below] at(2.4,2) {$(1,0)$};
			\node [below] at(4.4,2) {$(0, 2)$};
			\node [below] at(6.4,2) {$(1,2)$};
			\node [below] at (3, -1){$Fig. 1.$};
			
		\end{tikzpicture}  
	\end{center}

	\begin{proposition}
		Let $R$ be a $*$-ring. If $\Gamma^*_s(R)$ contains a cut vertex, then $x+ y\in V(\Gamma^*_s(R))\cup \{0\}$ for all $x, y \notin V(\Gamma^*_s(R))\cup \{0\}$.
	\end{proposition}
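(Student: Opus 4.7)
The plan is to argue by contradiction, using Theorem~\ref{thm2} to convert the existence of a cut vertex into ideal-theoretic structure on $R$. Let $a$ be a cut vertex of $\Gamma_s^*(R)$ and suppose, for contradiction, that there exist $x,y \notin V(\Gamma_s^*(R)) \cup \{0\}$ with $x+y \notin V(\Gamma_s^*(R)) \cup \{0\}$. Then each of $x$, $y$, and $x+y$ is nonzero and satisfies $r_R(wR)=\{0\}$ (where $w$ is the element in question). By Theorem~\ref{thm2}, $\{0,a\}$ is an ideal of $R$; consequently $wa \in \{0,a\}$ for every $w \in R$, and $2a = 0$.

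The core of the argument is the following intermediate claim: if $w \neq 0$ and $r_R(wR) = \{0\}$, then $wa = a$. To prove it I would suppose for contradiction that $wa = 0$; then for every $r \in R$, since $ra \in \{0,a\}$, we have $(wr)a = w(ra) \in \{w\cdot 0,\, w\cdot a\} = \{0, wa\} = \{0\}$. Hence $wRa = 0$, placing the nonzero element $a$ in $r_R(wR) = \{0\}$, a contradiction. Applying this claim to $x$, $y$, and $x+y$ yields $xa = ya = (x+y)a = a$. On the other hand, distributivity forces $(x+y)a = xa + ya = a + a = 2a = 0$, which contradicts $a \neq 0$. Therefore $x+y \in V(\Gamma_s^*(R)) \cup \{0\}$.

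The only real subtlety lies in the intermediate claim. One would naively hope to read off $wa = a$ directly from $r_R(wR) = \{0\}$, but that hypothesis only supplies \emph{some} $r \in R$ with $wra \neq 0$ (so $wra = a$); propagating this from a single $r$ to $wa = a$ itself is exactly where both the ideal property of $\{0,a\}$ and the relation $2a = 0$ must be used. Once the claim is in place, the contradiction comes immediately from additivity of the (well-defined) map $w \mapsto wa$ into the two-element abelian group $\{0,a\}$.
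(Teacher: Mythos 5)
Your proof is correct, and its overall strategy matches the paper's: both invoke Theorem~\ref{thm2} to get that $\{0,a\}$ is an ideal (so $2a=0$ and $ra\in\{0,a\}$ for all $r$), and both derive the conclusion from additivity together with the fact that $x$, $y$ and $-y$ lie outside $V(\Gamma^*_s(R))\cup\{0\}$. The noteworthy difference is the level at which the key step is carried out. The paper argues with sets: it shows $xRa=-yRa=\{0,a\}$ and then asserts that this forces $(x+y)Ra=0$. As literally written that implication does not follow, since the set equalities only say each of $xRa$ and $-yRa$ attains the value $a$ for \emph{some} $r$, while $(x+y)ra=xra+yra$ could still equal $a$ for an $r$ where exactly one summand is $a$. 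Your intermediate claim --- that $w\notin V(\Gamma^*_s(R))\cup\{0\}$ forces the single element $wa$ to equal $a$, proved by observing that $wa=0$ would give $wRa=w(Ra)\subseteq\{w\cdot 0,w\cdot a\}=\{0\}$ and hence $a\in r_R(wR)=\{0\}$ --- is exactly the pointwise information needed to close this gap, after which $(x+y)a=xa+ya=2a=0$ finishes the argument cleanly. So your write-up is not a different route but a tightened version of the paper's, and it repairs a step the paper leaves imprecise. One small caveat, shared equally with the paper: Theorem~\ref{thm2} is stated for a splitting of $\Gamma^*_s(R)$ via $a$ rather than for a bare cut vertex, so strictly one should note that every cut vertex yields such a splitting (take $X$ to be $a$ together with one component of $\Gamma^*_s(R)-a$ and $Y$ to be $a$ together with the rest).
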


	\begin{proof} 
		Let $a$ be a cut vertex of $\Gamma^*_s(R)$. By Theorem \ref{thm2}, $\{0, a\}$ is a left ideal in $R$, this gives $Ra=I=\{0, a\}$, and for any $x\in R$, $xRa=xI =\{0, xa\}  \subseteq I$. If $x\in R\backslash V(\Gamma^*_s(R))$ then $xRa\neq 0$, hence $x Ra\subseteq I$. Similarly $y\notin V(\Gamma^*_s(R))$ implies  $-y\notin V(\Gamma^*_s(R))$ implies $-yRa \subseteq I$. This gives $xRa=-yRa=I$, implies $(x+y)Ra=0$. Therefore, $x+y\in V(\Gamma^*_s(R))\cup \{0\}$.
	\end{proof}
	The converse of the above statement is not true. Let $R=\mathbb{Z}_4$ with identity mapping as an involution. Here $V(\Gamma^*_s(R))=\{2\}$ and for any $x, y \notin V(\Gamma^*_s(R))\cup \{0\}$, $x+y\in V(\Gamma^*_s(R))\cup \{0\}$ but $\Gamma^*_s(R)$ does not have a cut vertex.

	\begin{definition}
		If $a\neq 0$, we  say that $r_R(aR)$ (right annihilator of $aR$) is properly maximal if whenever $b\in R\backslash \{0, a\}$, we have $r_R(aR)\nsubseteq r_R(bR)$.
	\end{definition}
	
	\begin{proposition}{\label{pro1}}
		Let $R$ be a $*$-ring and $a$ is a cut vertex in $\Gamma^*_s(R)$, then  $r_R(aR)$ is properly maximal.
	\end{proposition}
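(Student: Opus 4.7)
The plan is to argue by contradiction: assume there is some $b \in R \setminus \{0, a\}$ with $r_R(aR) \subseteq r_R(bR)$, and derive a contradiction with the split structure guaranteed by $a$ being a cut vertex. The first move is to translate everything to the graph: two distinct vertices $u, v$ of $\Gamma^*_s(R)$ are adjacent iff $v^* \in r_R(uR)$, so the assumed inclusion forces every neighbor of $a$ (other than possibly $b$ itself) to be a neighbor of $b$.

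Before exploiting this, I would check that $b$ actually lies in $V(\Gamma^*_s(R))$. Because $a$ is a vertex, $r_R(aR) \ne \{0\}$, and the inclusion promotes $r_R(bR)$ to be nonzero as well; together with $b \ne 0$ this places $b$ in the vertex set. Now let $\Gamma^*_s(R)$ split through $a$ into subgraphs $X$ and $Y$, and without loss of generality assume $b \in V(X) \setminus \{a\}$. Following the style of the proof of Theorem \ref{thm2}, pick a neighbor $y \in V(Y) \setminus \{a\}$ of $a$; then $y^* \in r_R(aR) \subseteq r_R(bR)$, so $bRy^* = 0$. Since $b$ and $y$ lie on opposite sides of the split, they are distinct, hence $b \sim y$ in $\Gamma^*_s(R)$, yielding an edge between $V(X) \setminus \{a\}$ and $V(Y) \setminus \{a\}$ that violates clause (v) of the split definition. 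This contradiction closes the argument.

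The step I expect to need the most care is the selection of the cross-side neighbor $y$: one must know that $a$ has at least one neighbor in each of $V(X) \setminus \{a\}$ and $V(Y) \setminus \{a\}$, which is the implicit working convention of the paper and is exactly the fact silently used in the proof of Theorem \ref{thm2} (where a vertex $b \in V(Y) \setminus \{a\}$ with $aRb^* = 0$ is chosen without further comment). Once that point is justified, the rest is merely a repackaging of the annihilator containment $r_R(aR) \subseteq r_R(bR)$ as a forbidden edge across the split, and the proof requires no additional structure on $R$ beyond that of a $*$-ring.
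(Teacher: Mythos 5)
Your proof is correct and follows essentially the same route as the paper: assume $r_R(aR)\subseteq r_R(bR)$ for some $b\in R\backslash\{0,a\}$, observe that every neighbour of $a$ is then also a neighbour of $b$, and contradict the fact that $a$ is a cut vertex. The paper states this in two lines directly from the cut-vertex definition, whereas you route the contradiction through clause (v) of the split definition; the additional care you take (verifying $b\in V(\Gamma^*_s(R))$ and that $a$ has a neighbour on the side of the split opposite $b$) is sound and fills in details the paper leaves implicit, but does not change the underlying argument.
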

	
	\begin{proof}
		Suppose there exists $b\in R\backslash \{0,a\}$ such that $r_R(aR)\subseteq r_R(bR)$. Then every vertex adjacent to $a$ is also adjacent to $b$. Hence $a$ is not a cut vertex, a contradiction. Hence $r_R(aR)$ is properly Maximal.
	\end{proof}
	The following example shows that the converse of the above proposition is not true.
	\begin{example}
		Let $R=\mathbb{Z}_6$ with identity mapping as an involution. $r_R(4)=\{0, 3\}$ is properly maximal, but $4$ is not a cut vertex. 
	\end{example}

	\section{Strong zero-divisor graph of p.q.-Baer $*$-rings}
	
	In \cite{Bir4}, Birkenmeier \textit{et al.} introduced principally quasi-Baer(p.q.-Baer) $*$-rings. $CP(R)$ is the set of all central projections in $*$-ring $R$. An involution $*$ of a ring $R$ is said to be \textit{semiproper} if for any $a\in R$, $aRa^* = \{0\}$ implies $a = 0$. The class of p.q.-Baer $*$-rings is larger than the class of Baer $*$-rings and Rickart $*$-rings, see {\cite[Exercise 10.2.24.4, Exercise 10.2.24.5]{Bir4}}, {\cite[Example 2.3, Example 2.6]{Anil4}}. In p.q.-Baer $*$-ring, $aRb^*=0$ if and only if $C(a)C(b)=0$. Mostly, we use this equivalent condition for adjacency in $\Gamma^*_s(R)$. Note that, for a p.q.-Baer $*$-ring  $R$, $V(\Gamma^*_s(R))=\emptyset$, whenever $CP(R)=\{0, 1\}$. Henceforth, we assume that a p.q.-Baer $*$-ring  $R$ contains at least four Central projections, i.e. $|CP(R)|\geq 4$.
	
	In \cite{Axtell1}, Axtell proved the following result.
	\begin{theorem}[{\cite[Theorem 4.3]{Axtell1}}]{\label{thmc4}}
		Let $R$ be a commutative ring with identity such that $\Gamma(R)$ is partitioned into two subgraphs $X$ and $Y$ with cut vertex $a$ and $|X|>2$. If $X$ is a complete subgraph, then every vertex of $X$ is looped.
	\end{theorem}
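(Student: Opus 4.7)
The plan is to show the contrapositive at each vertex: if $x\in V(X)$ had $x^{2}\neq 0$, then one can produce a new vertex of $X$ that, by completeness, would force $x^{2}=0$. The engine is a simple observation: since $X$ is complete and $|X|>2$, any distinct $x,y\in V(X)$ satisfy $xy=0$, and for a third vertex $z\in V(X)\setminus\{x,y\}$ one has $(x+y)z=xz+yz=0$.

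First I would establish a closure lemma, namely that whenever $x,y\in V(X)$ are distinct with $x+y\neq 0$, the sum $x+y$ again lies in $V(X)$. Given the displayed identity $(x+y)z=0$, if $x+y\neq z$ the sum is a nonzero zero-divisor and hence a vertex of $\Gamma(R)$, and being adjacent to $z\in V(X)\setminus\{a\}$ (together with $a$ being a cut vertex splitting $\Gamma(R)$ into $X$ and $Y$) forces $x+y\in V(X)$. Cases where $z$ must be chosen as $a$ itself, or where $x+y=z$, are handled by picking a different third vertex; $|V(X)|\geq 3$ supplies enough room except in a small residual configuration dealt with below.

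With the closure in hand, the main step is immediate. Fix $x\in V(X)$ and assume $x^{2}\neq 0$. Choose $y\in V(X)\setminus\{x\}$ with $y\neq -x$; such a $y$ exists because $V(X)$ has at least three elements. Then $x+y$ is nonzero, so by the lemma $x+y\in V(X)$, and since $x+y\neq x$, completeness of $X$ gives $(x+y)\cdot x=0$. Expanding, $x^{2}+xy=0$, and $xy=0$ by completeness, so $x^{2}=0$, contradicting the assumption. Hence every vertex of $X$ is looped.

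The main obstacle I anticipate is the small/boundary case $|V(X)|=3$ together with the treatment of the cut vertex itself, $x=a$. In the $|V(X)|=3$ situation the only candidate third vertex $z$ for the closure lemma may coincide with $a$, so one cannot use the cut-vertex property directly to locate $x+y$; this is where the previously cited Axtell result that $\{0,a\}$ is an ideal becomes essential, since it forces $a^{2}\in aR\subseteq\{0,a\}$, reducing to either $a^{2}=0$ (immediate) or $a^{2}=a$. The idempotent sub-case yields a ring decomposition $R\cong\mathbb{Z}_{2}\oplus S$, whose zero-divisor graph structure can be analyzed to rule out a complete $X$ of size $\geq 3$ containing $a$ with non-nilpotent vertices, completing the proof.
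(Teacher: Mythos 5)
First, note that the paper does not actually prove this statement: it is quoted verbatim from Axtell \emph{et al.} as Theorem~4.3 of \cite{Axtell1}, and the paper only proves an analogue for p.q.-Baer $*$-rings (Theorem~\ref{thm6}). So the only in-paper proof to compare with is that of Theorem~\ref{thm6}, and your strategy is the same one in spirit: manufacture a new element ($x+y$ for you, $b^2$ and $b^2+b$ there), show it must land in $V(X)$, and then let completeness of $X$ kill $x^2$. Your main step is correct: once $x+y\in V(X)\setminus\{x\}$ is known, $(x+y)x=0$ together with $xy=0$ gives $x^2=0$, and the cases $x+y=0$ and $x+y\in V(X)\setminus\{x\}$ are both harmless.

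The genuine gap is in your closure lemma and in the ``residual configuration.'' As stated (``for all distinct $x,y\in V(X)$ with $x+y\neq 0$, $x+y\in V(X)$''), the proof you give fails exactly when the only available third vertex is $z=a$: adjacency to $a$ does not place $x+y$ in $X$ rather than $Y$, and when $|V(X)|=3$ and $\{x,y\}=V(X)\setminus\{a\}$ there is no other $z$ to pick. Your proposed repair --- invoking $\{0,a\}$ being an ideal, splitting into $a^2=0$ versus $a^2=a$, and then ``analyzing'' $R\cong\mathbb{Z}_2\oplus S$ --- is only asserted, not carried out, so as written the proof is incomplete. Moreover the detour is unnecessary: for each fixed $x$ you only need one good pair, so first choose $z\in V(X)\setminus\{a,x\}$ (nonempty since $|V(X)|\geq 3$) and then $y\in V(X)\setminus\{x,z\}$ (take $y=a$ when $x\neq a$). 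Then $(x+y)z=0$ with $z\in V(X)\setminus\{a\}$, so either $x+y=0$ (whence $x^2=-xy=0$), or $x+y\in V(X)\setminus\{x\}$ directly, or $x+y$ is a nonzero zero-divisor adjacent to $z$ and hence forced into $V(X)$ by the splitting condition; every branch yields $x^2=0$, including $x=a$. With that reordering of quantifiers the argument closes without any case analysis on $a^2$.
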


	The following theorem is analogous to the Theorem \ref{thmc4}.
	
	\begin{theorem}{\label{thm6}}
		Let $R$ be a p.q.-Baer $*$-ring such that  $\Gamma^*_s(R)$ splits into two subgraphs $X$ and $Y$ via a such that $|V(X)|>2$. If $X$ is a complete subgraph, then $b^2\in \{0,b\}$, for all $b\in V(X)\backslash \{a\}$.
	\end{theorem}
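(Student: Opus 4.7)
The plan is to exploit the central-cover characterization of adjacency valid in any p.q.-Baer $*$-ring, namely $xRy^{*}=0$ iff $C(x)C(y)=0$, together with the elementary fact that $C(b^{2})\le C(b)$ (which holds because $C(b)b^{2}=b^{2}$, so the smallest central projection $C(b^{2})$ with this absorption property must satisfy $C(b^{2})=C(b^{2})C(b)$). Fixing $b\in V(X)\setminus\{a\}$, I will assume $b^{2}\neq 0$ and derive $b^{2}=b$.

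First I would check that $b^{2}$ is actually a vertex of $\Gamma_s^*(R)$ when $b^{2}\neq 0$: since $X$ is complete with $|V(X)|>2$, the vertex $b$ has some neighbour $c\in V(X)\setminus\{b\}$, so $bRc^{*}=0$ yields $b^{2}Rc^{*}=b(bRc^{*})=0$, placing $c^{*}\in r_R(b^{2}R)\setminus\{0\}$. Hence $b^{2}\in V(\Gamma_s^*(R))=V(X)\cup V(Y)$, and the task is to pin down its location.

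The proof then splits into three cases. Case (A): if $b^{2}=a$, then $X$ complete gives $b\sim a$, hence $C(b)C(a)=0$; but $C(a)=C(b^{2})\le C(b)$ forces $C(a)=C(a)C(b)=0$, contradicting $a\neq 0$. Case (B): if $b^{2}\in V(X)\setminus\{a,b\}$, then $X$ complete yields $b\sim b^{2}$, so $C(b)C(b^{2})=0$; combined with $C(b^{2})\le C(b)$ this gives $C(b^{2})=0$, contradicting $b^{2}\neq 0$. Case (C): if $b^{2}\in V(Y)\setminus\{a\}$, pick $c\in V(X)\setminus\{a,b\}$ (available since $|V(X)|>2$); then $b\sim c$ implies $b^{2}Rc^{*}=0$ and $b^{2}\neq c$, so $b^{2}\sim c$, violating splitting condition (v) because $b^{2}$ and $c$ lie on opposite sides of the cut. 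The only remaining option is $b^{2}=b$.

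The main technical obstacle is Case (A): purely graph-theoretic hypotheses do not rule out $b^{2}=a$, so one is forced to combine the adjacency equation $C(b)C(a)=0$ with the central-cover inequality $C(b^{2})\le C(b)$ to collapse $C(a)$ to $0$. The other two cases reduce to the same orthogonality trick, so the real substance of the proof lies in translating graph adjacency into the lattice of central projections via the p.q.-Baer hypothesis and then exploiting $C(b^{2})\le C(b)$.
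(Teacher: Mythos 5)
Your proof is correct, but it takes a genuinely different route from the paper's. The paper also begins by noting that $cR(b^{2})^{*}=0$ for the neighbours $c$ of $b$ in $X$, which (by the splitting condition) forces $b^{2}\in V(X)$ when $b^{2}\neq 0$; but from there it never passes to central covers. Instead it introduces the auxiliary element $b^{2}+b$, checks that each of the possibilities $b^{2}+b=0$, $b^{2}+b=b$, $b^{2}+b\notin\{0,b\}$ leads to $bRb^{*}=0$, and concludes by semiproperness of the involution ($bRb^{*}=0\Rightarrow b=0$). You instead carry out a case analysis on the location of $b^{2}$ in the split graph and settle every case with the single lattice inequality $C(b^{2})\leq C(b)$ combined with the adjacency criterion $xRy^{*}=0\iff C(x)C(y)=0$: your Cases (A) and (B) replace the paper's semiproperness step, and your Case (C) is essentially the containment argument the paper uses to force $b^{2}\in V(X)$. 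Your version localizes all the ring-theoretic input into $C(b^{2})\leq C(b)$ and avoids the somewhat delicate $b^{2}+b$ bookkeeping (where the paper must separately dispose of $b^{2}=-b$); the paper's version avoids explicit central-cover computations, though the semiproperness it invokes is itself a consequence of the same central-cover criterion. Both arguments are sound.
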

	\begin{proof}
		Let $b\in V(X)\backslash\{a\}$, with $b^2\notin \{0, b\}$. Since $X$ is complete subgraph, we have $cRb^*=0$, $\forall ~ c\in V(X)\backslash\{a, b\}$. Hence $cR(b^*)^2=cR(b^2)^*=0$ this gives $b^2\in V(X)$ and $cR(b^2+b)^* =0$. If $b^2+b=0$ then  $b^2=-b$, $b^2\neq b$ therefore $b^2$ is adjacent to $b$, implies $0=b^2Rb^*=-bRb^*=bRb^*$. Since involution in p.q.-Baer $*$-ring is semiproper, we have $b=0$, a contradiction. Hence $b^2+b\neq 0$. Thus $b^2+b\in V(X)$. If $b^2+b=b$ then $b^2=0\in \{0, b\}$, a contradiction. If $b^2+b\notin \{0, b\}$, then $0=(b^2+b)R(b)^*=bRb^*$, again a contradiction. Therefore $b^2\in\{0, b\}$, $\forall$  $b\in V(X)\backslash \{a\}$.	
	\end{proof}
	
	\begin{corollary}
		Let $R$ be a p.q.-Baer $*$-ring such that $\Gamma^*_s(R)$ splits into two subgraphs $X$ and $Y$ via $a$ such that $|V(X)|>2$. Then the following statements holds.
		\begin{itemize}
			\item[(a)] If there is $b \in V(X)\backslash \{a\}$ with $b^2\notin \{0, b\}$, then $X$ cannot be complete.
			\item[(b)] If $X$ is complete subgraph, then $V(X)\cup \{0\}$ is an ideal of $R$.
		\end{itemize}
	\end{corollary}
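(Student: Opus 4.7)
The plan is to observe that both parts of this corollary are essentially immediate consequences of results already established in the excerpt, so the proof reduces to citing Theorem \ref{thm6} and Theorem \ref{thm3} in the appropriate way.

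For part (a), I would argue by contrapositive. Suppose, toward a contradiction, that $X$ is complete. Then since $|V(X)|>2$ and $\Gamma^*_s(R)$ splits into $X$ and $Y$ via $a$, the hypotheses of Theorem \ref{thm6} are satisfied. That theorem forces $b^2 \in \{0,b\}$ for every $b \in V(X)\setminus\{a\}$, contradicting the existence of some $b \in V(X)\setminus\{a\}$ with $b^2 \notin \{0,b\}$. Hence $X$ cannot be complete. This is the full argument for (a); no further work is required.

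For part (b), I would note that if $X$ is a complete subgraph, then in particular $X\setminus\{a\}$ is a complete subgraph (a subgraph of a complete graph on the same vertex set minus one vertex is still complete). Thus the hypotheses of Theorem \ref{thm3} are met, and that theorem gives directly that $V(X) \cup \{0\}$ is an ideal of $R$. Again, nothing further is needed.

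Since this corollary is purely a reformulation/repackaging of Theorems \ref{thm6} and \ref{thm3}, there is no real obstacle; the only thing to be careful about is the mild distinction between ``$X$ is complete'' and ``$X\setminus\{a\}$ is complete'' in the invocation of Theorem \ref{thm3}, which is trivial. I expect the write-up to be three or four lines per part.
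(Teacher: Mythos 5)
Your proposal is correct. For part (a) it coincides with the paper, which likewise disposes of (a) by citing Theorem \ref{thm6} (your phrasing as a contrapositive is just the explicit version of the same one-line reduction). For part (b) you take a genuinely different, and in fact more economical, route: the paper re-proves the ideal property from scratch, fixing a vertex $b\in V(X)$ and verifying closure of $V(X)\cup\{0\}$ under addition and under left and right multiplication by $R$ directly from the relations $xRb^*=0$ --- essentially duplicating the argument of Theorem \ref{thm3} --- whereas you simply observe that completeness of $X$ implies completeness of the induced subgraph $X\setminus\{a\}$, so the hypotheses of Theorem \ref{thm3} (which are otherwise identical: the same splitting of $\Gamma^*_s(R)$ into $X$ and $Y$ via $a$) are satisfied and the conclusion follows at once. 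Your reduction is logically sound and arguably preferable, since it exposes that (b) needs neither the p.q.-Baer hypothesis nor $|V(X)|>2$ beyond what Theorem \ref{thm3} already assumes; the paper's direct computation buys nothing extra here except independence from Theorem \ref{thm3}. The one point worth a sentence in a final write-up is the one you already flag: that "$X$ complete" gives "$X\setminus\{a\}$ complete," and that $a$ is then adjacent to some (indeed every) vertex of $X\setminus\{a\}$, which is what the proof of Theorem \ref{thm3} actually uses at its opening step.
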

	\begin{proof}
		(a) Follow from Theorem {\ref{thm6}}.\\
		(b) Let $b\in V(X)$. Then for any $c\in V(X)\backslash \{b\}$, we have $bRc^*=0$, which gives $(b+b)Rc^*=0$. If $b+b\neq 0$, then $b+b\in V(X)\subseteq V(X)\cup \{0\}$. Let $x, y \in V(X)\cup \{0\}$, if $x=0$ or $y=0$, then $x+y\in V(X)\cup \{0\}$. Let $x=b$, $y\neq b$, then $(b+y)Rc^*=0$, for any $c\in V(X)\cup \{0\}$. Hence $x+y\in V(X)\cup \{0\}$. Let $x\neq b$, and  $y \neq b$. Then $xRb^*=0=yRb^*$ gives $(x+y)Rb^*=0$. Therefore  $x+y\in V(X)\cup \{0\}$. Let $r\in R$, then $xRb^*=0$ implies $rxRb^*=0$ and $xrRb^*=0$. Thus $V(X)\cup \{0\}$ is an ideal of $R$.
	\end{proof}
	Set of all central projections in p.q.-Baer $*$-ring forms a lattice under the partial order, $``e \leq f$ if and only if $e = ef = fe"$ and $e \wedge f = ef$ and $e\vee f = e + f-ef$, it is denoted by $L(CP(R))$. An element $e$ in a lattice is an atom if $0\leq f \leq e$ implies either $f=0$ or $f=e$.

	\begin{theorem}{\label{thm4}}
		Let $R$ be a p.q.-Baer $*$-ring and $a$ is cut vertex of  $\Gamma^*_s(R)$, Then $a$ is an atom in the lattice of central projections in $R$.
	\end{theorem}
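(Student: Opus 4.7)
The plan is to establish that $a$ is a central projection and then to show that no nonzero central projection lies strictly below it. The starting point is Theorem \ref{thm2}: because $a$ is a cut vertex, $\{0,a\}$ is a two-sided ideal of $R$. Since $1 \in R$, this forces $aR = Ra = \{0,a\}$, together with $2a = 0$ and $a^2 \in \{0,a\}$.

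First, I would prove that $a^* = a$. The set $aRa^*$ is $*$-invariant, because $(ara^*)^* = ar^*a^* \in aRa^*$; it lies in $aR = \{0,a\}$ and contains $0$, so it equals either $\{0\}$ or $\{0,a\}$. Semiproperness of the involution in a p.q.-Baer $*$-ring rules out $aRa^* = \{0\}$, hence $aRa^* = \{0,a\}$, and since $*$ fixes $0$ and permutes this two-element set it must fix $a$ as well. This is the step I expect to be the main obstacle: extracting self-adjointness directly from the mere smallness of the ideal $\{0,a\}$, without any further handle on $a$.

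Next I would invoke the p.q.-Baer hypothesis. Write $r_R(aR) = eR$ for a projection $e$. Since $r_R(aR)$ is automatically two-sided, the standard short argument (combining $re = er'$ for some $r'$ with $e^2 = e$, then applying $*$) forces $e$ to be central, so $ae = ea = 0$. Now $a^2 \in \{0,a\}$; if $a^2 = 0$ then $aRa \subseteq \{0,a^2\} = \{0\}$, placing $a$ in $r_R(aR) = eR$, whence $ea = a$, contradicting $ea = 0$. Therefore $a^2 = a$, and together with $a^* = a$ this says $a$ is a projection. From $a(1-a) = 0$ we get $1-a \in eR$, so $e(1-a) = 1-a$; using $ea = 0$ this collapses to $e = 1-a$, and hence $a = 1-e$ is a central projection.

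Atomicity is then immediate from the ideal structure of $\{0,a\}$: any central projection $f$ with $f \leq a$ satisfies $f = af \in aR = \{0,a\}$, so $f \in \{0,a\}$. Hence no nonzero central projection lies strictly below $a$, and $a$ is an atom in $L(CP(R))$.
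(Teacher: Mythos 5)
Your argument is correct, but it follows a genuinely different route from the paper's. The paper's proof is two lines long: it sets $e=C(a)$, uses the p.q.-Baer identity $r_R(aR)=r_R(C(a)R)$ together with Proposition \ref{pro1} (proper maximality of $r_R(aR)$ at a cut vertex) to force $a=C(a)$, and then applies proper maximality once more to any central projection $f\leq a$. You instead start from Theorem \ref{thm2} (the ideal $\{0,a\}$), extract self-adjointness from semiproperness of the involution, and build the complementary central projection explicitly, identifying $r_R(aR)=eR$ with $e=1-a$; the central cover never appears. What your route buys is extra structural information ($2a=0$, $a^*=a$, $a^2=a$, and $r_R(aR)=(1-a)R$) and independence from the existence of central covers; what it costs is length, plus a reliance on Theorem \ref{thm2}, which is stated for splits rather than cut vertices --- harmless, since every cut vertex induces a split and the paper itself applies Theorem \ref{thm2} to cut vertices in this way. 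Two small points of precision: the self-adjointness step you flagged as the main obstacle does go through exactly as written ($aRa^*$ is a $*$-invariant subset of $aR=\{0,a\}$, nonzero by semiproperness, hence equal to $\{0,a\}$ and fixed pointwise by $*$); and in the sentence ``from $a(1-a)=0$ we get $1-a\in eR$'' you should note that $eR=r_R(aR)$, so what is needed is $aR(1-a)=0$ --- which does hold because $aR=\{0,a\}$ and $a^2=a$, so that $r_R(aR)=r_R(a)$ in this situation.
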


	\begin{proof}
		First we show that $a$ is a central projection, Let $e=C(a)$, we have $r_R(aR)=r_R(eR)$. If $a\neq e$, then  $e\in R\backslash \{0, a\}$, such that $r_R(aR)\subseteq r_R(eR)$, a contradiction to fact that $a$ is cut vertex. Hence $a=C(a)=e$ \textit{i.e.} $a$ is a central projection. Next, let $f$ be a central projection in $R$ such that $f\leq a$ i.e. $f=af=fa$. Then $r_R(aR)\subseteq r_R(fR)$. Since $r_R(aR)$ is properly maximal, we get either $f=0$ or $f=a$. Therefore $a$ is an atom in the lattice of central projections in $R$. 	
	\end{proof}
	
	The converse of the above theorem is not true.
	\begin{example}
		Let $R=M_2(\mathbb{Z}_6)$ with identity mapping as an involution. Then $R$ is a p.q.-Baer $*$-ring such that 	
		$ CP(R)={\left\{
			0= \begin{bmatrix}  0  & 0 \\  0 &  0\end{bmatrix}, 
			1= \begin{bmatrix}  1  & 0 \\  0 &  1\end{bmatrix}, 
			e= \begin{bmatrix}  3  & 0 \\  0 &  3\end{bmatrix}, 
			f= \begin{bmatrix}  4  & 0 \\  0 &  4\end{bmatrix}
			\right \}}$ are the central projections in $R$. Here, $\Gamma^*_s(R) \cong  K_{80, 15}$  and
		  $e= \begin{bmatrix}  3  & 0 \\  0 &   3 \end{bmatrix}$ is an atom in $L(CP(R))$ but $e$ is not cut vertex in $\Gamma^*_s(R)$.
		\begin{center}
			\begin{tikzpicture}[scale=.6]    
				\draw (0,4)--(2,2)--(0,0)--(-2,2)--(0,4);
				\draw[fill=black](0,4) circle(.1);
				\draw[fill=black](0,0) circle(.1);
				\draw[fill=black](2,2) circle(.1);
				\draw[fill=black](-2,2) circle(.1);
				\node [above] at(0,4) {$1$};
				\node [below] at(0,0) {$0$};
				\node [left] at (-2,2) {$e$};
				\node [right] at (2,2) {$f$};
				\node [left] at (-2,2) {$e$};
				\node [below] at(0,-1) {$Fig. 2$};
			\end{tikzpicture}  
		\end{center}
	\end{example}
	\noindent Vertex of degree $1$ is called pendent vertex. Note that, if $a$ is pendent vertex in $\Gamma^*_s(R)$ then $a = c(a)$. 
	
	\noindent Here we provide a necessary and sufficient condition so that $\Gamma^*_s(R)$ contains a cut vertex.
	\begin{theorem}{\label{thm5}}
		Let $R$ be p.q.-Baer $*$-ring. Then $\Gamma^*_s(R)$ contains a cut vertex if and only if it contains a pendent vertex.
	\end{theorem}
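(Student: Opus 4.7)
The plan is to combine the adjacency rule $a\sim b \iff C(a)C(b)=0$, valid in a p.q.-Baer $*$-ring, with Theorem \ref{thm4}, which says every cut vertex is an atom of $L(CP(R))$. For the forward direction, given a cut vertex $a$, I will produce the pendent vertex $1-a$ explicitly. By Theorem \ref{thm4}, $a$ is an atom central projection, and $|CP(R)|\ge 4$ forces $a\ne 1$; hence $1-a$ is a nonzero central projection distinct from $a$, with $C(1-a)=1-a\ne 1$, so $1-a\in V(\Gamma_s^*(R))$. Centrality of $a$ and $(1-a)a=0$ give the edge $1-a\sim a$. For any putative further neighbour $c\ne a$ of $1-a$, the relation $C(c)(1-a)=0$ forces $C(c)\le a$, and atomicity then forces $C(c)=a$. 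Because adjacency depends only on the central cover, $c$ and $a$ share the same neighbourhood, so every pair of neighbours of $a$ remains connected in $\Gamma_s^*(R)-a$ via $c$, and the component count of $\Gamma_s^*(R)-a$ cannot exceed that of $\Gamma_s^*(R)$, contradicting the cut-vertex property. Hence $1-a$ has degree one.

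For the reverse direction let $b$ be a pendent vertex with unique neighbour $a$. The note preceding the theorem gives $b=C(b)$, so $b$ is a central projection and $C(a)b=0$. The easy subcase is $\deg(a)\ge 2$: then $a$ is a cut vertex, because removing $a$ isolates $b$ (its only neighbour was $a$), producing a singleton component $\{b\}$ in addition to a separate component containing any other neighbour of $a$, and hence the component count strictly increases.

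The main obstacle is the residual subcase $\deg(a)=\deg(b)=1$, where $a$ is also pendent, so $a=C(a)$ is a central projection with $ab=0$. My plan is first to show that $a$ and $b$ are atoms in $L(CP(R))$ -- otherwise any central projection $f$ strictly between $0$ and $a$ would give $fb=0$ and hence a second neighbour of $b$ -- and then to exploit $|CP(R)|\ge 4$. For any central projection $e\notin\{0,1,a,b\}$, atomicity forces $ea\in\{0,a\}$ and $eb\in\{0,b\}$; a short case analysis, using that $a$ and $b$ admit no third neighbour, rules out every possibility and pins down $CP(R)=\{0,1,a,b\}$ with $a+b=1$. The decomposition $R\cong aR\oplus bR$ (each summand carrying only trivial central projections) then realises $\Gamma_s^*(R)$ as the complete bipartite graph on $P_a\cup P_b$, with $P_a=aR\setminus\{0\}$ and $P_b=bR\setminus\{0\}$; the pendent hypothesis, together with $|CP(R)|\ge 4$, must be squeezed to locate a cut vertex inside this bipartite picture (the candidate being the unique vertex on the smaller side whenever the other side has at least two vertices). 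Handling this structural end-game carefully via the lattice of central projections is the delicate step of the argument.
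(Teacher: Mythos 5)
Your forward direction is complete and correct, and it is essentially the paper's own argument: by Theorem \ref{thm4} the cut vertex $a$ is an atom of $L(CP(R))$, and the pendent vertex produced is $1-a$. The only difference is cosmetic — the paper concludes that a neighbour $c$ of $1-a$ with $C(c)=a$ must equal $a$ by citing Proposition \ref{pro1} (proper maximality of $r_R(aR)$), whereas you inline the same fact by observing that such a $c$ would have the same neighbourhood as $a$, contradicting the cut-vertex property. Both are fine.

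The converse is where your proposal stalls, and the stall is telling. Your first subcase ($\deg(a)\geq 2$, where $a$ is the unique neighbour of the pendent vertex $b$) is in fact the paper's entire converse: $\Gamma^*_s(R)$ is connected by Corollary \ref{cor2}, so deleting $a$ isolates $b$ while some other neighbour of $a$ survives in a different component, and $a$ is a cut vertex. The residual subcase you defer — $a$ and $b$ both pendent and adjacent to each other — cannot be finished the way you hope. Connectivity forces $V(\Gamma^*_s(R))=\{a,b\}$ there, and your own structural analysis then pins down $CP(R)=\{0,1,a,1-a\}$ with $aR=\{0,a\}$ and $(1-a)R=\{0,1-a\}$, i.e.\ $R\cong\mathbb{Z}_2\oplus\mathbb{Z}_2$ with the identity involution; its graph is a single edge, which has a pendent vertex but no cut vertex at all. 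So the ``delicate endgame'' is not delicate but impossible: the stated equivalence fails for $\mathbb{Z}_2\oplus\mathbb{Z}_2$, and the paper's one-line converse (``since $a$ is a pendent vertex, $1-C(a)$ is a cut vertex'') silently assumes $|V(\Gamma^*_s(R))|\geq 3$. You should either impose that hypothesis explicitly — after which your first subcase is the whole proof and the bipartite analysis is unnecessary — or record the degenerate case as a counterexample.
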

	\begin{proof}
		Let $a$ be a cut vertex in $\Gamma^*_s(R)$. By Theorem \ref{thm4}, $a$ is an atom in $L(CP(R))$. Also, $a$ is adjacent to $1-a$. Let $b$ be a vertex adjacent to $1-a$, Hence $(1-a)RC(b)=0$, this gives $(1-a)C(b)=0$, i.e. $C(b)\leq a$ in $L(CP(R))$. Since $a$ is an atom and $C(b)\neq 0$, this implies $C(b)=a$. Since $r_R(bR)=r_R(C(b)R)$ i.e. $r_R(bR)=r_R(aR)$. By Proposition \ref{pro1}, $r_R(aR)$ properly maximal this implies $a=b$. Therefore, $a$ is the only vertex adjacent to $1-a$. Thus $1-a$ is pendent vertex.  Conversely, suppose that $a$ is a pendent vertex in $\Gamma^*_s(R)$, so $C(a)$ is non trivial by Corollary \ref{cor2}, $\Gamma^*_s(R)$ is connected. Clearly $a$ and $1-C(a)$ are adjacent in $\Gamma^*(R)$. Since $a$ is a pendent vertex, $1-C(a)$ is a cut vertex.
	\end{proof}

	\begin{corollary}
		Let $R$  be a p.q.-Baer $*$-ring. Then $\Gamma^*_s(R)$ contains a cut vertex if and only if there exists $a\in R$ such that $|r_R(a)|=2$. 
	\end{corollary}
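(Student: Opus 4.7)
The plan is to bridge this corollary to Theorem~\ref{thm5} by showing that the existence of a cut vertex is equivalent to the existence of an element $a$ with $|r_R(a)|=2$, using the extra structure guaranteed by Theorem~\ref{thm4} and Proposition~\ref{pro1}. The forward direction is the easier of the two.

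For the forward direction, I would start from a cut vertex $b$. Theorem~\ref{thm4} tells us $b$ is an atom in $L(CP(R))$ (in particular a central projection), and Proposition~\ref{pro1} tells us $r_R(bR) = (1-b)R$ is properly maximal. I expect to show $bR = \{0,b\}$: for any $d \in bR$ one has $db = d$ by centrality of $b$, and a short computation using centrality shows $dR(1-b)R = 0$, so $(1-b)R = r_R(bR) \subseteq r_R(dR)$; proper maximality then forces $d \in \{0,b\}$. Taking $a = 1-b$ and using the identity $r_R(1-b) = bR$ for a central idempotent yields $|r_R(a)| = 2$.

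For the backward direction, let $a$ satisfy $r_R(a) = \{0, c\}$ with $c \ne 0$. Because $r_R(a)$ is a right ideal, $cR \subseteq \{0,c\}$ and $2c = 0$, so $cR = \{0, c\}$. The heart of the argument is to show that $c$ is an atom central projection. I would proceed in four sub-steps: (i) $c = c^*$, by observing $cc^*$ lies in both $cR = \{0,c\}$ and $Rc^* = (cR)^* = \{0, c^*\}$, and appealing to semiproperness of the involution applied to $cRc^*$ to rule out the case $c \ne c^*$; (ii) $c^2 = c$, by an analogous use of $cRc$ and semiproperness to rule out $c^2 = 0$; (iii) $c$ is central, by showing that for any $r \in R$ the element $cr - rc$ lies in $cR = \{0,c\}$ (since $c(cr-rc) = cr - rc$, once one checks $crc = rc$ in both cases of the binary choice for $rc$) while simultaneously $(cr - rc) c = 0$, leaving $0$ as the only admissible value; (iv) $c$ is an atom, since any central projection $e$ with $0 < e \le c$ satisfies $e = ec \in cR = \{0,c\}$, forcing $e = c$. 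Once $c$ is an atom central projection with $cR = \{0,c\}$, the neighbors of $1-c$ are exactly the nonzero $b$ with $bR(1-c) = 0$; centrality of $1-c$ reduces this to $b \in cR = \{0, c\}$, so $b = c$ is the only neighbor. Thus $1-c$ is pendent, and Theorem~\ref{thm5} supplies a cut vertex.

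The main obstacle I anticipate is step (iii), establishing centrality of $c$ from only the relations $cR = \{0,c\}$, $c = c^*$, and $c^2 = c$. The crucial observation is that $cr - rc$ is simultaneously pinned inside $\{0, c\}$ by left multiplication by $c$ and annihilated by $c$ on the right; these two constraints squeeze it to $0$. Once centrality is in hand, the remaining pieces are quick applications of the lattice-theoretic and graph-theoretic tools already developed, together with the standing assumption $|CP(R)| \ge 4$ that prevents the degenerate case $c = 1$.
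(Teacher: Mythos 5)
Your proposal is correct and follows the same route as the paper: both directions are reduced to Theorem \ref{thm5} via the existence of a pendent vertex. The paper's own proof is a four-line sketch that merely asserts ``$1-a$ is a pendant vertex, therefore $|r_R(1-a)|=2$'' and ``clearly, $a$ is a pendant vertex''; your argument supplies precisely the verifications those assertions hide --- that a cut vertex $b$ satisfies $bR=\{0,b\}$ via proper maximality, and that the nonzero element $c$ of $r_R(a)$ is a self-adjoint, idempotent, central atom with $Rc=\{0,c\}$, which is what actually makes the pendency claim go through.
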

	\begin{proof}
		Suppose that $a$ is a cut vertex in $\Gamma^*_s(R)$. Then as in Theorem \ref{thm5}, $1-a$ is a pendant vertex. Therefore, $|r_R(1-a)|=2$. Conversely, suppose that $|r_R(a)|=2$. Clearly, $a$ is a pendant vertex. By Theorem \ref{thm5}, $\Gamma^*_s(R)$ contains a cut vertex.
	\end{proof}
	The following result is a analogue of results by Redmond {\cite[Corollar 6]{spr2}} and Patil \textit{et al.} {\cite[Theorem 3.9]{Pat2}  }
	\begin{theorem}
		Let $R$ be a p.q.-Baer $*$-ring. Then the set of cut vertices of $\Gamma^*_s(R)$ forms a complete subgraph of $\Gamma^*_s(R)$.
	\end{theorem}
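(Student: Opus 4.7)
The plan is to leverage Theorem \ref{thm4}, which tells us that any cut vertex of $\Gamma^*_s(R)$ is an atom in the lattice $L(CP(R))$. In particular, its proof establishes that a cut vertex $a$ satisfies $a = C(a)$, so $a$ is itself a central projection (and hence self-adjoint). This is the key structural fact that makes the result nearly automatic once we invoke the adjacency criterion available for p.q.-Baer $*$-rings, namely $xRy^{*}=0$ if and only if $C(x)C(y)=0$.

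Given two distinct cut vertices $a$ and $b$, I would first record that $a,b\in CP(R)$ and are atoms in $L(CP(R))$ by Theorem \ref{thm4}. Then I would look at the product $ab$. Since both $a$ and $b$ are central projections, $ab$ is again a central projection, and it satisfies $ab\cdot a = a^{2}b = ab$ and $ab\cdot b = ab$, so $ab\leq a$ and $ab\leq b$ in the lattice order. Because $a$ is an atom, either $ab=0$ or $ab=a$. If $ab=a$ held, then $a\leq b$, and atomicity of $b$ together with $a\neq 0$ would force $a=b$, contradicting distinctness. Hence $ab=0$, i.e., $C(a)C(b)=0$, and since $b^{*}=b$, the adjacency criterion gives $aRb^{*}=aRb=(ab)R=0$. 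Therefore $a$ and $b$ are adjacent in $\Gamma^*_s(R)$.

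Since the argument works for any pair of distinct cut vertices, the induced subgraph on the set of cut vertices is complete. There is no genuine obstacle here: the heavy lifting was done in Theorem \ref{thm4} (cut vertices are central-projection atoms) and in the standing adjacency equivalence for p.q.-Baer $*$-rings; the present step is just the observation that distinct atoms of $L(CP(R))$ are orthogonal. The only small care needed is to confirm that $ab$ is central (immediate, as both factors are central) and self-adjoint (immediate, as both factors are projections), so that $ab$ lies in $L(CP(R))$ and the atomicity arguments apply verbatim.
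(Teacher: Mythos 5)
Your proof is correct, and it reaches the conclusion by a more direct route than the paper's. Both arguments rest on Theorem \ref{thm4} (a cut vertex equals its central cover, hence is a central projection, and is an atom of $L(CP(R))$), but from there you simply observe that two distinct atoms $a,b$ of the lattice of central projections are orthogonal: $ab\leq a$ forces $ab\in\{0,a\}$ by atomicity of $a$, and $ab=a$ would give $a\leq b$ and hence $a=b$ by atomicity of $b$; so $ab=C(a)C(b)=0$ and adjacency follows from the standing criterion $aRb^*=0$ if and only if $C(a)C(b)=0$. The paper instead routes the final step through Theorem \ref{thm5}: it records that $1-e$ and $1-f$ are pendent vertices, invokes connectedness of $\Gamma^*_s(R)$, and derives a contradiction from $ef\neq 0$ because $ef$ would then be adjacent to the pendent vertices $1-e$ and $1-f$ in addition to their unique neighbours. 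Your version avoids the detour through pendent vertices and connectivity entirely and is purely lattice-theoretic, which is arguably cleaner; the paper's version gains nothing extra here beyond reusing machinery it had already built. One small point worth making explicit in a final write-up: in the chain $aRb^*=aRb=(ab)R$ you are using that $b$ is self-adjoint and central, both of which you have already justified, so nothing is actually missing.
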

	
	\begin{proof}
		If $\Gamma^*_s(R)$ contains only one cut vertex, then the result is obvious. Suppose that $\Gamma^*_s(R)$ has at least two cut vertices, say $e$ and $f$. Then both $e$ and $f$ are central projections in $R$  and $1-e$, $1-f$ both are pendent vertices in $\Gamma^*_s(R)$. Since, $\Gamma^*_s(R)$ is connected, there is path joining $1-e$ and $1-f$  which definitely contains $e$ and $f$. If $ef\neq 0$, then it is adjacent to both $1-e$ and $1-f$, a contradiction since these are pendent vertices. Hence $e$ and $f$ are adjacent. Hence, the set of cut vertices forms a complete subgraph of  $\Gamma^*_s(R)$. 
	\end{proof}
	
	\section{Complement of Strong Zero-Divisor Graph of a p.q.-Baer $*$-ring}
	In this section, we characterize all p.q.-Baer $*$-rings for which $\Gamma^*_s(R)^c$ is connected.  We use the following known properties of a simple graph.
	\begin{lemma}\label{Lemma of connected graph}
		Let $G$ be a simple graph. Then 
		\begin{enumerate}
			\item $G^c$ is connected if and only if $diam(G)\geq 3$,
			\item $G$ or $G^c$ is connected,
			\item Complement of a complete graph has no edges,
			\item Complement of a complete bipartite graph is disconnected.
		\end{enumerate}
	\end{lemma}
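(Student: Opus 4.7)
The four assertions are standard facts about simple graphs, so the plan is to dispatch each directly from the definitions and to use part (2) as a stepping-stone for part (1).

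The cleanest starting point will be (2). I would argue the contrapositive: if $G$ is disconnected with components $C$ and $C'$, then every $u\in C$ and $v\in C'$ satisfy $uv\notin E(G)$, so $uv\in E(G^c)$; moreover any two vertices within the same component of $G$ can be joined in $G^c$ via a length-$2$ detour through any vertex of a different component. Thus $G^c$ is connected, which gives (2).

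For the forward direction of (1), I would refine this idea. Assuming $diam(G)\geq 3$, pick $u,v\in V(G)$ with $d_G(u,v)\geq 3$, observe that $uv\in E(G^c)$, and note that no third vertex $w$ can be a common $G$-neighbour of both $u$ and $v$ (else $d_G(u,v)\leq 2$), so every such $w$ is $G^c$-adjacent to at least one of $u$ or $v$. The edge $uv$ together with these spokes exhibits a spanning connected subgraph of $G^c$. For the reverse direction I would work by contrapositive, combining (2) with a short case check on $diam(G)\in\{1,2\}$ to produce a disconnection of $G^c$.

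Parts (3) and (4) will be immediate from the definitions. Since every pair of distinct vertices of $K_n$ is already an edge, $K_n^c$ has no edges at all. The complement of $K_{m,n}$ is the disjoint union $K_m\sqcup K_n$ on the two partite sets, which is visibly disconnected whenever both parts are nonempty.

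The main obstacle will be the reverse direction of (1): the slick one-shot witness argument used elsewhere is unavailable, and a careful case analysis on the value of $diam(G)$ is required to manufacture the disconnection of $G^c$ from a graph of small diameter.
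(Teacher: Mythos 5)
The paper states this lemma without proof, citing it as a collection of known facts, so there is no argument of the paper's to compare against; your write-up has to stand on its own. Most of it does: your proof of (2) (vertices in different components of $G$ are $G^c$-adjacent, and vertices in a common component are joined in $G^c$ by a length-two detour through another component) is correct, as are (3) and (4) and the forward direction of (1) (if $d_G(u,v)\geq 3$ then $u$ and $v$ have no common $G$-neighbour, so $uv$ together with the spokes from every other vertex to $u$ or $v$ spans $G^c$; this in fact gives $diam(G^c)\leq 3$).

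The genuine gap is the reverse direction of (1), which you correctly identify as the main obstacle but only defer to ``a careful case analysis on $diam(G)\in\{1,2\}$.'' No such analysis can succeed, because that implication is false: the $5$-cycle $C_5$ has $diam(C_5)=2$, yet $C_5^c\cong C_5$ is connected. (The true statement is only the one-way implication $diam(G)\geq 3\Rightarrow G^c$ connected; the converse fails for $C_5$, the Petersen graph, and many other diameter-$2$ graphs with connected complements.) So the defect here lies in the lemma as stated rather than in your strategy, but your plan as written commits you to proving something false. Note that the paper never actually uses the ``only if'' direction: in Theorem \ref{th6} connectedness of $\Gamma^*_s(R)^c$ is deduced from $diam(\Gamma^*_s(R))=3$ (the true direction), and disconnectedness in the $|CP(R)|=4$ case is obtained from part (4) via Proposition \ref{pro2}, not from part (1). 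The honest fix is to weaken (1) to the one-way implication, which is all the paper needs and all that your (correct) forward argument establishes.
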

	\begin{example}
		Let $R_1=\mathbb{Z}_6$ and $R_2= \mathbb{Z}_2\times \mathbb{Z}_2\times \mathbb{Z}_2$ be $*$ rings with identity mapping as an involution. Note that $R_1$ and $R_2$ are  p.q.-Baer $*$-rings.
		Observe that, $V(\Gamma^*_s(\mathbb{Z}_6))= \{2, 3, 4\}$ and $V(\Gamma^*_s {(\mathbb{Z}_2\times \mathbb{Z}_2\times \mathbb{Z}_2)})= \{a= (1,0,0), b=(0,1,0), c=(0,0,1), d=(1,1,0), e=(1,0,1), f=(0,1,1)\}$. See fig. 3, the complement of  $\Gamma^*_s(R_1)^c$ is disconnected and complement of  $\Gamma^*_s(R_2)^c$ is  connected.  
	\end{example}

	\begin{center}
		\begin{tikzpicture}[scale=0.7]

			\draw (0,2)--(0,0)--(2,0);
			\draw[fill=black](2,0) circle(.1);
			\draw[fill=black](0,0) circle(.1);
			\draw[fill=black](0,2) circle(.1);
			\node [left ] at(0,2) {$2$};
			\node [left ] at(0,0) {$3$};
			\node [below] at(2,0) {$4$};
			\node [below] at(1,-0.5) {$\Gamma^*_s(\mathbb{Z}_6)$};
			\node [below] at(1,-1.5) {(i)};

			\draw (4,2)--(6,0);
			\draw[fill=black](4,0) circle(.1);
			\draw[fill=black](6,0) circle(.1);
			\draw[fill=black](4,2) circle(.1);
			\node [left ] at(4,2) {$2$};
			\node [left ] at(4,0) {$3$};
			\node [below] at(6,0) {$4$};
			\node [below] at(5,-0.5) {$\Gamma^*_s(\mathbb{Z}_6)^c$};
			\node [below] at(5,-1.5) {(ii)};

			\draw (8,0)--(10,0)--(12,0)--(14,0);
			\draw (11,4)--(11,2)--(12,0);
			\draw (11,2)--(10,0);
			\draw[fill=black](8,0) circle(.1);
			\draw[fill=black](10,0) circle(.1);
			\draw[fill=black](12,0) circle(.1);
			\draw[fill=black](14,0) circle(.1);
			\draw[fill=black](11,4) circle(.1);
			\draw[fill=black](11,2) circle(.1);
			\node [below ] at(8,0) {$e$};
			\node [below ] at(10,0) {$b$};
			\node [below ] at(12,0) {$c$};
			\node [below ] at(14,0) {$d$};
			\node [left ] at(11,2) {$a$};
			\node [left] at(11,4) {$f$};

			\node [below] at (11, -0.5){$ \Gamma^*_s {(\mathbb{Z}_2\times \mathbb{Z}_2\times \mathbb{Z}_2)}$};
			\node [below] at(11,-1.5) {(iii)};
			
			\draw (16,0)--(19,4)--(22,0)--(16,0);
			\draw (17.5,2)--(20.5,2)--(19,0)--(17.5,2);
			\draw[fill=black](16,0) circle(.1);
			\draw[fill=black](17.5,2) circle(.1);
			\draw[fill=black](20.5,2) circle(.1);
			\draw[fill=black](22,0) circle(.1);
			\draw[fill=black](19,4) circle(.1);
			\draw[fill=black](19,0) circle(.1);
			
			\node [below ] at(16,0) {$a$};
			\node [left ] at(17.5,2) {$d$};
			\node [right ] at(20.5,2) {$f$};
			\node [below ] at(22,0) {$c$};
			\node [left ] at(19,4) {$a$};
			\node [below] at(19,0) {$e$};
			
			\node [below] at (19, -0.5){$ \Gamma^*_s ({\mathbb{Z}_2\times \mathbb{Z}_2\times \mathbb{Z}_2})^c$};
			\node [below] at(19,-1.5) {(iv)};
			
			\node [below] at (11, -2.5){$Fig. 3.$};

		\end{tikzpicture}  
	\end{center}

	From the above example, it is clear that the complement of the strong zero-divisor graph of a p.q.-Baer $*$-ring is not connected in general.\\
	In \cite{Visweswaran1}, Visweswaran determined when $\Gamma(R)^c$ is connected for a commutative ring with unity.
	
	\begin{theorem}[{\cite[Theorem 1.1]{Visweswaran1}}]
		Let $R$ be a ring such that $Z(R)^*$ contains at least two elements. Then $(\Gamma(R))^c$ is connected if and only if either (i), (ii) or (iii) holds:\\
		(i) $R$ has exactly one maximal N-prime $P$ of $(0)$ and $P$ is not a $B$-prime of $(0)$.\\
		(ii) $R$ has exactly two maximal N-primes $P_1, P_2$ of $(0)$ and $P_1 \cap P_2 \neq (0)$.\\
		(iii) $R$ has at least three maximal N-primes of $(0)$.
	\end{theorem}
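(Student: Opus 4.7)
The strategy is to translate the graph-theoretic connectivity condition into algebraic properties of the associated primes of $(0)$. The structural fact underlying the theorem is that $Z(R) = \bigcup_{\alpha} P_\alpha$, where $P_\alpha$ ranges over the maximal N-primes of $(0)$, and adjacency in $(\Gamma(R))^{c}$ is precisely $xy \neq 0$ for distinct $x, y \in Z(R)^{*}$. The plan is to prove both implications by contrapositive/constructive reasoning, reducing connectivity questions to how vertices sit inside these primes.

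For the forward direction, I would assume that none of (i), (ii), (iii) holds and split into two sub-cases. Either $R$ has a unique maximal N-prime $P$ that is itself a B-prime, or $R$ has exactly two maximal N-primes $P_1, P_2$ with $P_1 \cap P_2 = (0)$. In the first sub-case, the B-prime hypothesis furnishes $0 \neq x$ with $\mathrm{ann}(x) = P = Z(R)$, so $xy = 0$ for every $y \in Z(R)^{*} \setminus \{x\}$; hence $x$ is isolated in $(\Gamma(R))^{c}$ and the complement is disconnected. In the second sub-case, for any $x \in P_1 \setminus P_2$ and $y \in P_2 \setminus P_1$ one has $xy \in P_1 \cap P_2 = (0)$, so no edge of $(\Gamma(R))^{c}$ crosses between the disjoint blocks $P_1^{*}$ and $P_2^{*}$.

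For the reverse direction, the goal in each case is to exhibit, for any two vertices $x, y$ with $xy = 0$, a vertex $z$ with $xz \neq 0$ and $yz \neq 0$, so that $x - z - y$ is a path of length $2$ in $(\Gamma(R))^{c}$. In case (iii), given $x, y$, at least one of the three or more maximal N-primes must contain neither $\mathrm{ann}(x)$ nor $\mathrm{ann}(y)$ in a constraining way, and prime avoidance produces a suitable $z$. In case (ii), I would use a fixed nonzero $w \in P_1 \cap P_2$ as a universal connector: since $w$ lies in every maximal N-prime, the annihilator of any vertex misses a witness near $w$, and one can then weave paths of length at most $3$ from any $x$ to $y$ through $w$. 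In case (i), having a unique maximal N-prime $P$ that is \emph{not} a B-prime means that for every $0 \neq x$ the annihilator $\mathrm{ann}(x)$ is strictly contained in $P$; this strict containment guarantees that no single element kills all of $Z(R)^{*}$, which is exactly what is needed to produce the common non-annihilator $z$ for any pair $x, y$.

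The main obstacle is case (i): the non-B-prime hypothesis is a delicate \emph{non-existence} assumption about annihilators, and one must convert it into a concrete \emph{existence} of a vertex avoiding two given annihilator ideals simultaneously. The expected tool is a careful prime-avoidance-style argument inside $P$, combined with selecting elements in the union of the symmetric differences of $\mathrm{ann}(x)$ and $\mathrm{ann}(y)$ relative to $P$, and verifying the choice stays in $Z(R)^{*}$ rather than slipping out to a unit.
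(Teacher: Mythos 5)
First, a point of context: the paper does not prove this statement. It is quoted verbatim, with attribution, as Theorem 1.1 of \cite{Visweswaran1}, purely as background for the authors' own results on $\Gamma^*_s(R)^c$ (Theorem \ref{th6} and its sequels); there is no proof in this paper to compare yours against, so your attempt can only be judged on its own terms.

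On those terms, your forward (contrapositive) direction is correct and essentially complete: the negation of (i)--(iii) does reduce to your two sub-cases, the B-prime witness $x$ with $\mathrm{ann}(x)=P=Z(R)$ is indeed isolated in $(\Gamma(R))^c$, and $P_1\cap P_2=(0)$ does separate $P_1^*$ from $P_2^*$ with no crossing edges. The reverse direction, however, is a plan rather than a proof. Case (i), which you flag as ``the main obstacle,'' can in fact be closed cleanly: since $P$ is not a B-prime, $\mathrm{ann}(x)$ and $\mathrm{ann}(y)$ are \emph{proper} additive subgroups of $P=Z(R)$, and a group is never the union of two proper subgroups, so some $z\in Z(R)^*$ has $xz\neq 0\neq yz$ (and $z\notin\{x,y\}$ because $xy=0$), giving a path of length $2$. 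Case (iii) also goes through roughly as you say, once you record that $\mathrm{ann}(x)$, being an ideal inside $P_1\cup\cdots\cup P_n$, lies in a single $P_i$ (prime avoidance), and that a third prime $P_k$ is not contained in $P_i\cup P_j$. The genuine gap is case (ii): when $\mathrm{ann}(x)\subseteq P_1$ and $\mathrm{ann}(y)\subseteq P_2$ with $x,y$ on opposite sides, every candidate connector must itself lie in $Z(R)=P_1\cup P_2$, so the length-$2$ argument breaks down, and your proposal to ``weave paths of length at most $3$ through $w\in(P_1\cap P_2)^*$'' is an unsubstantiated hope rather than an argument --- you never show $w$ is reachable from a vertex $x$ with $xw=0$, nor where the hypothesis $P_1\cap P_2\neq(0)$ is actually consumed. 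This sub-case is where the real work of the theorem lies, and as written it is missing.
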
 
	In the following result, we prove that the distance between two elements in $\Gamma^*_s(R)$ is the same as the distance between their central covers. We use this result to characterize the connectedness of $\Gamma^*_s(R)^c$.
	\begin{lemma}
		Let $R$ be a p.q.-Baer $*$-ring, and $a, b \in \Gamma^*_s(R)$ Then  $d(a,b)=n$ if and only if $d(C(a), C(b))=n$.
	\end{lemma}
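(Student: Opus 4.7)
The plan is to exploit the characterization recorded earlier in the paper for p.q.-Baer $*$-rings: $xRy^{*}=0$ if and only if $C(x)C(y)=0$, so adjacency in $\Gamma^{*}_{s}(R)$ is controlled entirely by central covers. First I would record two supporting observations. (i) For every $x\in V(\Gamma^{*}_{s}(R))$ the central cover $C(x)$ is a nontrivial central projection, hence itself a vertex: $x\neq 0$ forces $C(x)\neq 0$, and $r_{R}(xR)\neq\{0\}$ rules out $C(x)=1$, because if $C(x)=1$ then $xRy^{*}=0$ would force $C(y)=0$ and so $y=0$, making $r_{R}(xR)=\{0\}$. (ii) $C(C(x))=C(x)$, since every central projection is its own central cover. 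These ensure $d(C(a),C(b))$ is a well-defined quantity in the same graph as $d(a,b)$.

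With this setup, the proof reduces to two walk constructions. For the inequality $d(C(a),C(b))\leq d(a,b)$, take a shortest path $a=x_{0}\sim x_{1}\sim\cdots\sim x_{n}=b$ and push it forward to the sequence $C(a),C(x_{1}),\ldots,C(x_{n-1}),C(b)$. Each consecutive pair satisfies $C(x_{i})C(x_{i+1})=0$ by adjacency of $x_{i}$ and $x_{i+1}$, and no two consecutive terms coincide: if $C(x_{i})=C(x_{i+1})$ then $C(x_{i})^{2}=C(x_{i})=0$, contradicting $x_{i}\neq 0$. Hence this is a genuine walk of length $n$ in $\Gamma^{*}_{s}(R)$, and so $d(C(a),C(b))\leq n$.

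For the reverse inequality, start from a shortest path $C(a)=y_{0}\sim y_{1}\sim\cdots\sim y_{m}=C(b)$ and propose the pulled-back sequence $a,y_{1},\ldots,y_{m-1},b$. The adjacency $C(a)\sim y_{1}$ translates to $C(a)C(y_{1})=0$, which is exactly the adjacency of $a$ and $y_{1}$ in $\Gamma^{*}_{s}(R)$ once one checks $a\neq y_{1}$: if $a=y_{1}$ then $C(a)\sim a$, giving $C(a)^{2}=C(a)=0$, impossible. The analogous argument handles $y_{m-1}\sim b$, while the interior adjacencies are inherited from the original path. This produces a walk of length $m$ from $a$ to $b$, so $d(a,b)\leq m$. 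Combining the two inequalities gives $d(a,b)=d(C(a),C(b))$, which is the desired biconditional.

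I expect the main obstacle to be the endpoint bookkeeping in the pull-back step. One must carefully verify that the two external substitutions $y_{0}\mapsto a$ and $y_{m}\mapsto b$ still yield a legitimate walk, ruling out the possibility that an internal vertex $y_{1}$ or $y_{m-1}$ coincides with $a$ or $b$ and collapses the walk; this is precisely what the short ``$C(a)\sim a$ is impossible'' argument accomplishes. A secondary subtlety, already absorbed into the same argument, is the degenerate case where $a$ is itself a central projection so that $C(a)=a$: the corresponding endpoint replacement is then trivial, but the walk construction and its length bound remain valid without change.
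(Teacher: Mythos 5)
Your proof is correct and follows essentially the same route as the paper's: both directions transport a shortest path between the elements and their central covers via the equivalence $xRy^{*}=0\Leftrightarrow C(x)C(y)=0$, with the same endpoint verifications (that $C(x)$ is a nontrivial central projection and hence a vertex, and that substituting $a$ for $C(a)$ at the ends cannot collapse an edge). The only caveat, which your argument shares with the paper's own proof, is that the degenerate case $C(a)=C(b)$ with $a\neq b$ (e.g.\ $a=(1,0)$, $b=(2,0)$ in $\mathbb{Z}_3\oplus\mathbb{Z}_3$, where $d(a,b)=2$ but $d(C(a),C(b))=0$) is silently excluded, since there the pull-back of a length-$0$ path yields nothing.
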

	\begin{proof}
		Suppose $d(a,b)=n$ in $V(\Gamma^*_s(R))$. Let $a=a_1\sim a_2 \sim \cdots \sim  a_{n+1}=b$ be a path of length $n$ joining $a$ and $b$. Therefore, $C(x)\notin \{0, 1\}$ for $x\in \{a_1, a_2, \cdots , a_{n+1}\}$ and $C(a_i)C(a_j)=0$ if and only if $j=i+1$, for $i \in \{1,2,\cdots,  n\}$. Hence $C(a_1)\sim  C(a_2) \sim  \cdots \sim  C(a_{n+1})$ is a path. Observe that $C(a_i)\neq C(a_j)$, for $i\neq j$, otherwise we get a path of length less than $n$ which connects $a$ and $b$. Since $a$ and $b$ are non-adjacent, we have $C(a)C(b)\neq 0$, Hence $C(a)$ and $C(b)$ are non-adjacent.  Suppose there is a path of length less than $n$ which connects $C(a)$ and $C(b)$.  Since $a$  and $C(a)$ are adjacent to the same vertices, we get a path of length less than $n$, which connects $a$ and  $b$, a contradiction. Hence, $d(C(a), C(b))=n$. Conversely, suppose that $d(C(a), C(b))=n$. Let $C(a)\sim b_1\sim b_2\sim \cdots \sim b_{n-1} \sim C(b)$, Clearly  $a\sim b_1\sim b_2 \sim \cdots  \sim b_{n-1} \sim b$  is a path, and as proved above, there is no path of length less than $n$  which connects $a$ and $b$. Therefore $d(a, b)=n$.
	\end{proof}
	\noindent For any central projection $e$ in a p.q.-Baer $*$-ring $R$, we denote $C_e=\{a\in R~|~ C(a)=e\}$.
	
	\begin{proposition}{\label{pro2}}
		Let $R$ be a p.q.-Baer $*$-ring.  If $|CP(R)|=4$, then $\Gamma^*_s(R)$ is complete bipartite, hence $\Gamma^*_s(R)^c$ is disconnected.
	\end{proposition}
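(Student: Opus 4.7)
The plan is to use the Boolean-algebra structure of $CP(R)$ to show that when $|CP(R)|=4$, the vertices of $\Gamma^*_s(R)$ split into exactly two classes according to their central cover, and then verify that this yields a complete bipartition.

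First I would pin down $CP(R)$. Since $e\in CP(R)$ implies $1-e\in CP(R)$, and both $0,1\in CP(R)$, the assumption $|CP(R)|=4$ forces $CP(R)=\{0,1,e,1-e\}$ for some nontrivial central projection $e$. Next, for every $a\in V(\Gamma^*_s(R))$ one has $C(a)\neq 0$ (otherwise $a=0$) and $C(a)\neq 1$ (otherwise $r_R(aR)=r_R(C(a)R)=r_R(R)=0$, contradicting $a\in V(\Gamma^*_s(R))$). Hence $C(a)\in\{e,1-e\}$, giving the partition $V(\Gamma^*_s(R))=C_e\cup C_{1-e}$. Both classes are nonempty because $e\in C_e$ and $1-e\in C_{1-e}$, and these indeed lie in $V(\Gamma^*_s(R))$ since $r_R(eR)=(1-e)R\neq 0$ and similarly for $1-e$.

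Now I would check adjacency using the criterion recalled at the start of Section~3: in a p.q.-Baer $*$-ring, $aRb^*=0$ iff $C(a)C(b)=0$. For $a,b\in C_e$, $C(a)C(b)=e\cdot e=e\neq 0$, so $a\not\sim b$; likewise for $a,b\in C_{1-e}$, $C(a)C(b)=1-e\neq 0$, so $a\not\sim b$. For $a\in C_e$ and $b\in C_{1-e}$, $C(a)C(b)=e(1-e)=0$, so $a\sim b$. Hence $\Gamma^*_s(R)$ is the complete bipartite graph on the parts $C_e$ and $C_{1-e}$, i.e. $\Gamma^*_s(R)\cong K_{|C_e|,|C_{1-e}|}$.

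The conclusion is then immediate from Lemma~\ref{Lemma of connected graph}(4): the complement of a complete bipartite graph is disconnected, so $\Gamma^*_s(R)^c$ is disconnected. No real obstacle arises; the only point to watch is ensuring both parts $C_e,C_{1-e}$ are nonempty so that the bipartition is genuine, which is handled by the observation above that $e$ and $1-e$ themselves sit in opposite classes.
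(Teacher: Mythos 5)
Your proof is correct and follows essentially the same route as the paper: partition the vertices into $C_e$ and $C_{1-e}$ and apply the adjacency criterion $aRb^*=0\iff C(a)C(b)=0$ together with Lemma~\ref{Lemma of connected graph}(4). You actually supply slightly more detail than the paper does (verifying non-adjacency within each class and nonemptiness of both parts), but the underlying argument is identical.
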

	
	\begin{proof}
		Suppose $CP(R)=\{0,1,e, 1-e\}$. Then $C_e, C_{1-e}\subseteq V(\Gamma^*_s(R))$. Therefore in $\Gamma^*_s(R)$, each element in $C_e$ is adjacent to every element to $C_{1-e}$. Hence $\Gamma^*_s(R)$ is complete bipartite. By Lemma \ref{Lemma of connected graph}, $\Gamma^*_s(R)^c$ disconnected.
	\end{proof}
	
	\begin{lemma}{\label{lm2}}
		In a p.q.-Baer $*$-ring for nontrivial central projections $e, f$ with $e+f\neq 1$, if $ef=0$, then $d(1-e, 1-f)=3$.
	\end{lemma}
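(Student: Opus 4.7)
The plan is to prove $d(1-e,1-f)=3$ by establishing the inequalities $d(1-e,1-f)\geq 3$ and $d(1-e,1-f)\leq 3$ separately. I will heavily use the equivalence, noted just before the start of Section 3, that in a p.q.-Baer $*$-ring $R$, $aRb^*=0$ iff $C(a)C(b)=0$; applied to central projections this becomes the familiar condition on their product, since $C(e)=e$ for a central projection $e$.

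First I would verify the basic setup: the four elements $e$, $f$, $1-e$, $1-f$ are all vertices of $\Gamma_s^*(R)$, because each is a nontrivial central projection (nontriviality for $1-e$ and $1-f$ is immediate from $e,f\notin\{0,1\}$), and for any nontrivial central projection $g$, the element $1-g$ is a nonzero right annihilator of $gR$. I would also check pairwise distinctness: $e\neq f$ (else $ef=e^2=e\neq 0$); $1-e\neq e$ (else $1=2e$, forcing $e=1$); $1-e\neq f$ and $1-f\neq e$ (both would contradict $e+f\neq 1$); and consequently $1-e\neq 1-f$.

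Next, for the lower bound, compute $(1-e)(1-f)=1-e-f+ef=1-e-f\neq 0$ by hypothesis, so $1-e$ and $1-f$ are not adjacent, giving $d(1-e,1-f)\geq 2$. To rule out distance $2$, suppose some vertex $x$ is adjacent to both $1-e$ and $1-f$. Then $C(x)(1-e)=0$ and $C(x)(1-f)=0$, i.e.\ $C(x)=C(x)e$ and $C(x)=C(x)f$, whence
\[
C(x)=C(x)e=C(x)ef=0,
\]
contradicting the fact that $x$ is a nonzero vertex (so $C(x)\neq 0$). Therefore no common neighbour exists and $d(1-e,1-f)\geq 3$.

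For the upper bound, I would exhibit the explicit path $1-e\sim e\sim f\sim 1-f$. Each adjacency follows from a direct product computation using centrality: $(1-e)e=0$, $ef=0$ (given), and $f(1-f)=0$. By the preceding paragraph the four vertices along this path are pairwise distinct, so it is a genuine path of length $3$, giving $d(1-e,1-f)\leq 3$. The main obstacle---really the only substantive one---is the common-neighbour argument in the lower bound, which requires invoking the central-cover characterization of adjacency for a potentially noncentral neighbour $x$; everything else reduces to direct multiplication of central projections.
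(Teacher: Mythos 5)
Your proposal is correct and follows essentially the same route as the paper: the explicit path $1-e\sim e\sim f\sim 1-f$ for the upper bound, the computation $(1-e)(1-f)=1-e-f\neq 0$ to rule out adjacency, and the central-cover argument $C(x)=C(x)e=C(x)ef=0$ to rule out a common neighbour. Your additional checks that the four elements are genuinely distinct vertices of $\Gamma_s^*(R)$ are a welcome tightening but do not change the argument.
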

	
	\begin{proof}
		Let $e, f$ be two non-trivial central projection in a p.q.-Baer $*$-ring $R$ with $e+f \neq 1$.
		If $ef=0$ then $e$ is adjacent to $f$. We know that $e$ is adjacent to $1-e$ and $f$ is adjacent to $1-f$. Hence $1-e \sim e \sim f \sim 1-f $ is a path of length $3$. Clearly, $(1-e)(1-f)\neq 0$, if possible $(1-e)(1-f)= 0$, implies that $1-f-e-ef=0$, i.e. $e+f=1$ which is a contradiction to $e+f \neq 1$. 
		If $1-e \sim a \sim 1-f$, then  $(1-e)\sim C(a) \sim 1-f$. Then $c(a)(1-e)=0$ and $c(a)(1-f)=0$. Which gives $c(a)=c(a)e=c(a)f$. This implies  $c(a)=c(a)f=c(a)ef=0$. So $a=c(a)a=0$, which is contradiction to $a \in V(\Gamma^*_s(R))$. Hence, there is no path of length $2$ joining $1-e$ and $1-f$. Thus, $d(1-e, 1-f)=3$.
	\end{proof}
	
	In the following result, we characterize all p.q.-Baer $*$-rings for which $\Gamma^*_s(R)^c$ is connected. 
	
	\begin{theorem} \label{th6}
		For a p.q.-Baer $*$-ring $R$, $\Gamma^*_s(R)^c$ is connected if and only if $|CP(R)|\geq 6$.
	\end{theorem}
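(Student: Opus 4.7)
The plan is to combine the Boolean-algebra structure of $CP(R)$ with Lemma \ref{lm2} and Lemma \ref{Lemma of connected graph}. First I will record the structural observation that $CP(R)$ is closed under $e\wedge f=ef$, $e\vee f=e+f-ef$, and $e\mapsto 1-e$, and hence is a Boolean algebra; in particular, whenever $CP(R)$ is finite, $|CP(R)|$ must be a power of $2$. Together with the standing hypothesis $|CP(R)|\ge 4$, this means that $|CP(R)|\ge 6$ is equivalent to $|CP(R)|\ge 8$ (or $|CP(R)|$ infinite), so the only remaining finite case to rule out in the converse direction is $|CP(R)|=4$.

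For the ``only if'' direction I will work by contrapositive. If $|CP(R)|<6$, the structural observation forces $|CP(R)|=4$, and Proposition \ref{pro2} then gives that $\Gamma^*_s(R)$ is complete bipartite; Lemma \ref{Lemma of connected graph}(4) concludes that $\Gamma^*_s(R)^c$ is disconnected.

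For the ``if'' direction, assume $|CP(R)|\ge 6$. My goal is to exhibit two nontrivial central projections $e,f$ with $ef=0$ and $e+f\ne 1$, so that Lemma \ref{lm2} yields $d(1-e,1-f)=3$ in $\Gamma^*_s(R)$, whence $\mathrm{diam}(\Gamma^*_s(R))\ge 3$ and Lemma \ref{Lemma of connected graph}(1) finishes. In the finite case, the Boolean algebra $CP(R)$ has at least $8$ elements and therefore at least three atoms $e_1,e_2,e_3$; I then take $e=e_1$ and $f=e_2$, noting that $ef=0$ because distinct atoms are orthogonal, and $e+f\le 1-e_3<1$ because $e_3$ is a nonzero central projection orthogonal to $e+f$. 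The infinite case is handled by the same device, after one preliminary splitting step producing three pairwise orthogonal nonzero central projections.

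The main obstacle is precisely this existence step in the ``if'' direction, that is, producing the two orthogonal nontrivial central projections whose join is not $1$ when $|CP(R)|\ge 6$; this is essentially a statement about the Boolean algebra $CP(R)$ and requires the small atom-count check sketched above. Everything beyond this point is a direct invocation of Proposition \ref{pro2}, Lemma \ref{lm2}, and the graph-theoretic facts in Lemma \ref{Lemma of connected graph}, so the overall proof is short once the Boolean-algebra reduction is in place.
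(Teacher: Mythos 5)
Your proof is correct, and the converse (``if'') direction takes a genuinely different route from the paper's. For the ``only if'' direction both arguments are essentially the same: reduce to $|CP(R)|=4$ and invoke Proposition \ref{pro2} together with Lemma \ref{Lemma of connected graph}. For the converse, the paper first splits on whether $\Gamma^*_s(R)$ itself is connected, then fixes an atom $e$ of $L(CP(R))$ and argues by contradiction: assuming every nontrivial $f$ with $ef=0$ satisfies $e+f=1$, it walks along a path from $e$ to an auxiliary projection $h$ and shows every central cover on the path alternates between $e$ and $1-e$, forcing $h\in\{e,1-e\}$. You instead exploit that $CP(R)$ is a Boolean algebra and directly manufacture three pairwise orthogonal nonzero central projections, two of which immediately satisfy the hypotheses of Lemma \ref{lm2}; this eliminates both the connectivity case-split and the path-tracing argument, and it does not presuppose the existence of atoms (the paper's ``let $e$ be any atom'' is delicate when $CP(R)$ is infinite, since an infinite Boolean algebra can be atomless). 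The one place you should tighten is the infinite case, which you only sketch as ``one preliminary splitting step'': a clean uniform argument, covering finite and infinite cases at once, is to pick any $h\notin\{0,1,e,1-e\}$ (possible since $|CP(R)|\geq 6$) and observe that among the four pairwise orthogonal projections $eh$, $e(1-h)$, $(1-e)h$, $(1-e)(1-h)$, which sum to $1$, at least three must be nonzero, for otherwise $h$ would lie in $\{0,1,e,1-e\}$. With that sentence added, your argument is complete and, in my view, slightly more robust than the paper's.
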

	\begin{proof}
		Suppose that $\Gamma^*_s(R)^c$ is connected. Observe that the sets $\{e, 1-e\}_{e\in CP(R)}$ forms a partition of $CP(R)$. Therefore, if  $|CP(R)|$ is infinite, then $|CP(R)|\geq 6$. If $|CP(R)|$ is finite, then it is even. If $|CP(R)|=2$, then $V(\Gamma^*_s(R))=\emptyset$.
		If $|CP(R)|=4$,  by Proposition \ref{pro2}, $\Gamma^*_s(R)^c$ is disconnected, which is a contradiction. Hence, $|CP(R)|\geq 6$. 	
		
		Conversely, suppose that $|CP(R)|\geq 6$. If graph $\Gamma^*_s(R)$ is disconnected, then by Lemma \ref{Lemma of connected graph}, $\Gamma^*_s(R)^c$ is connected. Suppose $\Gamma^*_s(R)$ is connected. Now, to show $\Gamma^*_s(R)^c$ is connected, for this, it is sufficient to show $diam(\Gamma^*_s(R))=3$. Let $e$ be any atom in the lattice of central projections in $R$. In view of Lemma \ref{lm2}, it is enough to show that there exists a nontrivial central projection $f$ such that $ef=0$ with $e+f\neq 1$. On the contrary, assume that $ef=0$ and $e+f=1$. Let $h$ be any projection in $R$ such that $h\notin \{0, 1, e, 1-e\}$ (such $h$ exists because $|CP (A)| \geq 6$). Since $h \neq 1-e$, we have $eh\neq 0$. If $(1-e)h = 0$, then we get $h = he = eh$, i.e. $h \leq e$. Since $e$ is an atom, we must have either $h = 0$ or $h = e$, a contradiction to the choice of $h$. Hence $(1-e)h \neq 0$. Since $\Gamma^*_s(R)$ is connected, there is a path joining $e$ and $h$. Suppose $e \sim a_1 \sim a_2 \sim \cdots \sim a_{k-1}\sim a_k = h$ is a path of length $k$ joining $e$ and $h$. Then $eC(a_1) = 0$. By assumption, we have $C(a_1 ) = 1-e$. Also, $C(a_1 )C(a_2) = 0$ which yields $(1-e)C(a_2 ) = 0$. This gives $C(a_2 ) \leq e$. Since, $e$ is an atom and $C(a_2)=0$, we get $C(a_2)=e$. Continuing in this way, we get $C(a_i ) = e$, if $i$ is even and $C(a_i ) = 1-e$, if $i$ is odd. Thus, $h = e$ or $h = 1-e$, a contradiction in both cases. Hence, there exists a central nontrivial projection $f$ such that $ef = 0$ with $e + f \neq 1$. By Lemma \ref{lm2},  $d(1-e, 1-f)=3$. Hence, by Lemma \ref{Lemma of connected graph}, $\Gamma^*(R)^c$ connected.   
	\end{proof}
	
	\begin{corollary}
		Let $R$ be a p.q.-Baer $*$-ring. If $\Gamma^*_s(R)$ contains a vertex adjacent to all the other vertices then $|CP(R)| = 4$.
	\end{corollary}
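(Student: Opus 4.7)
The plan is to obtain this as a quick consequence of Theorem \ref{th6} by reasoning on the complement graph. Suppose $a \in V(\Gamma^*_s(R))$ is adjacent to every other vertex of $\Gamma^*_s(R)$. Then $a$ becomes an isolated vertex of $\Gamma^*_s(R)^c$. Provided $|V(\Gamma^*_s(R))| \geq 2$, the presence of an isolated vertex forces $\Gamma^*_s(R)^c$ to be disconnected. By Theorem \ref{th6}, a disconnected $\Gamma^*_s(R)^c$ rules out the possibility $|CP(R)| \geq 6$, so $|CP(R)| \leq 5$.

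Next I would combine this upper bound with two observations already present in the paper. First, the standing hypothesis of Section 3 fixes $|CP(R)| \geq 4$ for the p.q.-Baer $*$-rings under consideration. Second, as used in the proof of Theorem \ref{th6}, the family $\{\{e, 1-e\} : e \in CP(R)\}$ partitions $CP(R)$ into pairs (with $e \neq 1-e$ always, since $2e=1$ together with $e^2=e$ would force $1=0$), so $|CP(R)|$ is even whenever finite. The three conditions $|CP(R)| \leq 5$, $|CP(R)| \geq 4$, and $|CP(R)|$ even pin down $|CP(R)| = 4$, as claimed.

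The only auxiliary point requiring verification is the claim $|V(\Gamma^*_s(R))| \geq 2$, which is needed so that an isolated vertex in the complement actually witnesses disconnection. This is immediate from $|CP(R)| \geq 4$: picking any nontrivial central projection $e$, both $e$ and $1-e$ belong to $V(\Gamma^*_s(R))$ and are distinct, hence $V(\Gamma^*_s(R))$ already contains two distinct vertices (and if the dominating $a$ happens to coincide with one of them, the other is a second vertex so the complement is still nonempty outside $\{a\}$). I do not foresee any genuine obstacle; the corollary really is a one-line consequence of Theorem \ref{th6} once the "dominating vertex $\Longleftrightarrow$ isolated vertex in the complement" translation is made.
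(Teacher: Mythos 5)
Your proposal is correct and follows essentially the same route as the paper: a dominating vertex becomes isolated in $\Gamma^*_s(R)^c$, so the complement is disconnected, and Theorem \ref{th6} together with the standing hypothesis $|CP(R)|\geq 4$ and the parity of $|CP(R)|$ forces $|CP(R)|=4$. Your extra checks (that $|V(\Gamma^*_s(R))|\geq 2$ so the isolated vertex genuinely witnesses disconnection, and the explicit parity step) are details the paper leaves implicit, but the argument is the same.
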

	
	\begin{proof}
		If $a$ is a vertex adjacent to
		all the other vertices in  $\Gamma^*_s(R)$ , then $a$ is  isolated vertex in $\Gamma^*_s(R)^c$. Hence  $\Gamma^*_s(R)^c$ is disconnected, by Theorem \ref{th6},	$|CP(R)|=4$.
	\end{proof}
	
	\begin{proposition}
		Let $R$ be a p.q.-Baer $*$-ring such that $\Gamma^*_s(R)^c$ is disconnected, then $gr(\Gamma^*_s(R)^c )\in \{3, \infty\}$.
		
	\end{proposition}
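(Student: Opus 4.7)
The plan is to reduce the problem to a very concrete graph structure using the two earlier results Theorem \ref{th6} and Proposition \ref{pro2}, and then read off the girth directly. Under the standing assumption $|CP(R)|\geq 4$, disconnectedness of $\Gamma^*_s(R)^c$ forces $|CP(R)|\leq 5$ by Theorem \ref{th6}. Since the central projections come in complementary pairs $\{e,1-e\}$, the cardinality $|CP(R)|$ is even, so in fact $|CP(R)|=4$. Write $CP(R)=\{0,1,e,1-e\}$.

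Next I would invoke Proposition \ref{pro2} to identify $\Gamma^*_s(R)$ with the complete bipartite graph on parts $C_e$ and $C_{1-e}$. Using the p.q.-Baer characterization of adjacency ($aRb^*=0$ iff $C(a)C(b)=0$), the complement $\Gamma^*_s(R)^c$ is then described completely: two distinct $a,b\in C_e$ satisfy $C(a)C(b)=e\neq 0$, so they are nonadjacent in $\Gamma^*_s(R)$ and therefore adjacent in $\Gamma^*_s(R)^c$; the same holds inside $C_{1-e}$; and any $a\in C_e$, $b\in C_{1-e}$ are adjacent in $\Gamma^*_s(R)$, hence nonadjacent in the complement. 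Consequently $\Gamma^*_s(R)^c$ is precisely the disjoint union of the complete graph on $C_e$ and the complete graph on $C_{1-e}$.

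From here the girth computation is a trivial case split. If at least one of $|C_e|,|C_{1-e}|$ is $\geq 3$, then the corresponding complete component contains a triangle, giving $gr(\Gamma^*_s(R)^c)=3$. Otherwise both $|C_e|\leq 2$ and $|C_{1-e}|\leq 2$, so each component of $\Gamma^*_s(R)^c$ is either a single vertex or a single edge; no cycle exists and $gr(\Gamma^*_s(R)^c)=\infty$. In either case the girth lies in $\{3,\infty\}$.

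There is no real obstacle beyond a careful application of the two cited results; the only mild subtlety is justifying that $|CP(R)|$ must be exactly $4$ (rather than merely $\leq 5$), for which the pairing $e\leftrightarrow 1-e$ together with the standing hypothesis $|CP(R)|\geq 4$ suffices. The rest is a direct structural description of $\Gamma^*_s(R)^c$ as a disjoint union of two cliques, from which the dichotomy on the girth is immediate.
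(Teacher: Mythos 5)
Your argument is correct and follows essentially the same route as the paper: disconnectedness of $\Gamma^*_s(R)^c$ forces $|CP(R)|=4$ via Theorem \ref{th6}, the complement decomposes as the disjoint union of the two cliques on $C_e$ and $C_{1-e}$, and the girth dichotomy follows from whether either part has at least three elements. Your write-up is in fact slightly more careful than the paper's, which asserts the two components without spelling out the adjacency computation $C(a)C(b)=e\neq 0$ inside each part.
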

	
	\begin{proof}
		Since $\Gamma^*_s(R)^c $ is disconnected, by Theorem \ref{th6} , $|CP(R)|=4$. Let $CP(R) =\{0, 1, e, 1-e\}$, $C_e =\{x \in R ~|~ C(x) = e\}$ and $C_{1-e}=\{x \in R~ | ~ C(x) = 1-e\}$. Clearly, $E_e$ and $E_{1-e}$ are two connected components of $\Gamma^*_s(R)^c$. If $|C_e|\geq 3$ or $|C_{1-e} | \geq 3$ then $gr(\Gamma^*_s(R))=3$. Otherwise $gr(\Gamma^*_s(R)^c ) =\infty$.
	\end{proof}
	
	\begin{lemma}{\label{lm4}}
		Let $R$ be a p.q.-Baer $*$-ring such that, $\Gamma^*_s(R)$ is triangle
		free. then $a$ and $b$ are adjacent in $\Gamma^*_s(R)$ if and only if $C(a)=1-C(b)$.
	\end{lemma}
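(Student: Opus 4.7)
The plan is to prove the forward direction by contradiction, using triangle-freeness to force the relation $C(a)+C(b)=1$, while the converse will be an immediate consequence of the adjacency criterion in a p.q.-Baer $*$-ring.

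For the forward direction, suppose $a \sim b$ in $\Gamma_s^*(R)$. By the standing equivalence for p.q.-Baer $*$-rings recorded in Section~3, this means $C(a)C(b)=0$. Write $e=C(a)$ and $f=C(b)$, and set $g := 1-e-f$. The goal is to show $g=0$. I would assume to the contrary that $g\neq 0$ and produce a triangle through $a$ and $b$. First I would verify that $g$ is itself a central projection: since $e,f$ are central projections and $ef=0$, one has $g^{2}=(1-e-f)^{2}=1-2e-2f+e^{2}+2ef+f^{2}=1-e-f=g$, and $g$ is central because $e,f$ are. Next, I would show $g\in V(\Gamma_s^*(R))$: because $g$ is central and $ge=e-e^{2}-fe=0$, we have $gRe=eRg\subseteq Rge=0$, so $e\in r_R(gR)$, hence $r_R(gR)\neq\{0\}$.

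Now I would check that $g$ is distinct from both $a$ and $b$, and adjacent to each. Since $g$ is a central projection, $C(g)=g=1-e-f$. If $g=a$ then $C(a)=e=1-e-f$, giving $2e+f=1$, and multiplying by $e$ (using $ef=0$ and $e^{2}=e$) yields $2e=e$, so $e=0$, contradicting $a\in V(\Gamma_s^*(R))$. Similarly $g\neq b$. For adjacency, compute
\[
C(a)C(g)=e(1-e-f)=e-e-ef=0,\qquad C(b)C(g)=f(1-e-f)=f-ef-f=0,
\]
so $a\sim g$ and $b\sim g$. Together with $a\sim b$ this produces a triangle on $\{a,b,g\}$, contradicting the triangle-freeness of $\Gamma_s^*(R)$. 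Therefore $g=0$, i.e.\ $C(a)=1-C(b)$.

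For the converse, assume $C(a)=1-C(b)$. Then $C(a)C(b)=(1-C(b))C(b)=C(b)-C(b)^{2}=0$, so $aRb^{*}=0$, i.e.\ $a$ and $b$ are adjacent (note $a\neq b$, for otherwise $C(a)=1-C(a)$ would force $C(a)=0$, contradicting $a\in V(\Gamma_s^*(R))$, except in characteristic considerations that the authors implicitly exclude via $a\in V(\Gamma_s^*(R))$).

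The only real obstacle is making the candidate $g=1-e-f$ a \emph{bona fide} vertex distinct from $a$ and $b$; all three checks reduce to short arithmetic with central projections satisfying $ef=0$, so no deeper machinery is needed beyond the equivalence $aRb^{*}=0\iff C(a)C(b)=0$ already established for p.q.-Baer $*$-rings.
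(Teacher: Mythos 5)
Your proof is correct and follows essentially the same route as the paper: assume $a\sim b$, so $C(a)C(b)=0$, and use the complementary central projection $1-C(a)-C(b)$ to manufacture a triangle contradicting triangle-freeness (the paper puts the triangle on $C(a)$, $C(b)$, $1-(C(a)+C(b))$, while you put it on $a$, $b$, $g$, and you supply the vertexhood and distinctness checks the paper leaves implicit). The parenthetical worry about ``characteristic considerations'' in your converse is unnecessary: $C(a)=1-C(a)$ gives $C(a)=C(a)^2=C(a)(1-C(a))=0$ in any ring.
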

	
	\begin{proof}
		Suppose $a \sim b$, then $C(a)C(b)=0$. Hence $C(a)+C(b)$ is a central projection in $R$. If $C(a)\neq(1-C(b))$, then $e=1-(C(a)+C(b))$ is a nontrivial central projection and $C(a)\sim C(b)\sim e \sim C(a)$ is a 3-cycle, a contradiction. Hence, $C(a)=1-C(b)$.
	\end{proof}
	
	\begin{theorem}[{\cite[Theorem 3.2]{ NMK1}}]{\label{thm10}}
		Let $R$ be a p.q.-Baer $*$-ring. Then 
		\begin{enumerate}
			\item [(A)] the following statements are equivalent.
			\begin{enumerate}
				\item [(i)] $gr(\Gamma^*_s(R))=3$.
				\item[(ii)] $R$ contains two nontrivial central projections $e$ and $f$ such that $ef=0$ and $e+f\neq 1$.
				\item[(iii)] $L(CP(R))$ contains a 3-element chain not containing 0.
			\end{enumerate}
			\item[(B)]  $gr(\Gamma^*_s(R))=4$ if and only if $\Gamma^*_s(R)$ is triangle-free and $R$ contains at least one nontrivial central projection $e$ such that $|C_e|\geq 2$ and $|C_{1-e}|\geq 2$.
		\end{enumerate}
	\end{theorem}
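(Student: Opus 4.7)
The plan is to prove Part (A) via the cycle of implications (i)$\Rightarrow$(ii)$\Rightarrow$(iii)$\Rightarrow$(ii)$\Rightarrow$(i), using throughout the p.q.-Baer $*$-ring identity $aRb^{*}=0 \iff C(a)C(b)=0$. For (i)$\Rightarrow$(ii), take a triangle $a\sim b\sim c\sim a$ and set $e=C(a)$, $f=C(b)$: pairwise orthogonality of $C(a),C(b),C(c)$ follows from the adjacencies, and $e+f\neq 1$ is forced because otherwise $C(c)=C(c)(e+f)=C(c)e+C(c)f=0$. For (ii)$\Rightarrow$(i), set $g=1-e-f$; then $g$ is a nonzero central projection orthogonal to both $e$ and $f$, so $\{e,f,g\}$ form a triangle.

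For the lattice bridge, (ii)$\Rightarrow$(iii) is immediate: since $ef=0$, $e+f$ is a central projection, and $\{e,\,e+f,\,1\}$ is a $3$-element chain avoiding $0$. The more delicate step is (iii)$\Rightarrow$(ii): given a chain $a<b<c$ in $L(CP(R))$ with $a\neq 0$, set $e=b-a$ and $f=c-b$. Using $a\leq b$ (so $ab=ba=a$) and $b\leq c$ (so $bc=cb=b$), one checks that $e$ and $f$ are central projections, $ef=0$, and $e+f=c-a$. Nontriviality follows from $a\neq b$, $b\neq c$, and from the observation that $e=1$ would force $a=0$ (and similarly for $f$), contradicting the hypothesis that the chain avoids zero. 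The same hypothesis $a\neq 0$ also gives $e+f=c-a\leq 1-a<1$.

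For Part (B), I would invoke Lemma \ref{lm4}: in a triangle-free $\Gamma_s^{*}(R)$, $a\sim b$ is equivalent to $C(a)=1-C(b)$. In the forward direction, $gr(\Gamma_s^{*}(R))=4$ means $\Gamma_s^{*}(R)$ is triangle-free and contains a $4$-cycle $a_1\sim a_2\sim a_3\sim a_4\sim a_1$; the lemma then forces $C(a_1)=1-C(a_2)=C(a_3)$ and $C(a_2)=1-C(a_3)=C(a_4)$, so setting $e=C(a_1)$ one has $a_1,a_3\in C_e$ and $a_2,a_4\in C_{1-e}$, and the distinctness of the four cycle vertices gives $|C_e|\geq 2$ and $|C_{1-e}|\geq 2$. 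Conversely, given such $e$, pick distinct $a_1,a_3\in C_e$ and distinct $a_2,a_4\in C_{1-e}$; orthogonality of $e$ and $1-e$ produces a $4$-cycle $a_1\sim a_2\sim a_3\sim a_4\sim a_1$, so $gr(\Gamma_s^{*}(R))\leq 4$, which combined with triangle-freeness gives girth exactly $4$.

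The main obstacle is the arithmetic in (iii)$\Rightarrow$(ii): one must carefully produce orthogonal central projections whose sum is strictly below $1$, and the hypothesis that the $3$-chain does not contain $0$ is precisely what rules out degenerate chains like $0<e<1$, for which the differences $e$ and $1-e$ would sum to $1$ and fail condition (ii). Everything else reduces to routine bookkeeping with central covers via $aRb^{*}=0 \iff C(a)C(b)=0$ together with Lemma \ref{lm4}.
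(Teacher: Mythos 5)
This theorem is not proved in the paper at all: it is quoted verbatim from \cite[Theorem 3.2]{NMK1} and used as a known result, so there is no in-paper argument to compare yours against. Judged on its own, your proof is correct and complete. The implication cycle for (A) works: pairwise orthogonality of the central covers of a triangle gives (i)$\Rightarrow$(ii); the element $g=1-e-f$ is indeed a nonzero central projection orthogonal to $e$ and $f$, giving (ii)$\Rightarrow$(i); the chain $e< e+f<1$ gives (ii)$\Rightarrow$(iii); and in (iii)$\Rightarrow$(ii) the differences $e=b-a$, $f=c-b$ are orthogonal central projections, with the identity $a(b-a)=ab-a^2=0$ correctly ruling out $e=1$ and $e+f=1$ precisely because the chain avoids $0$. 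Part (B) is also sound, though note that Lemma \ref{lm4} as stated in the paper only records the direction ``adjacent $\Rightarrow$ $C(a)=1-C(b)$''; the converse you use in building the $4$-cycle is immediate from $C(a)C(b)=(1-C(b))C(b)=0$, but it is worth saying explicitly, as is the observation that $e\neq 1-e$ for a nontrivial central projection (so that $C_e$ and $C_{1-e}$ are disjoint and the four chosen vertices are genuinely distinct).
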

	
	In the following result, we determine the diameter and girth of $\Gamma^*_s(R)^c$, whenever it is connected.
	
	\begin{theorem}\label{thm11}
		Let $R$ be a p.q.-Baer $*$-ring. If $\Gamma^*_s(R)^c$ is connected, then $diam(\Gamma^*_s(R)^c)=2$ and $gr(\Gamma^*_s(R)^c)=3$.
	\end{theorem}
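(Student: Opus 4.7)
The hypothesis that $\Gamma^*_s(R)^c$ is connected combined with Theorem \ref{th6} gives $|CP(R)| \geq 6$; moreover the proof of Theorem \ref{th6} actually produces two nontrivial central projections $e, f$ with $ef = 0$ and $e + f \neq 1$. My plan is to exploit this structural information to obtain, for an arbitrary pair of non-adjacent vertices of $\Gamma^*_s(R)^c$, a common neighbor (yielding $diam \leq 2$), and separately to exhibit an explicit triangle (yielding $gr = 3$).

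For the diameter bound, let $a, b \in V(\Gamma^*_s(R))$ be non-adjacent in $\Gamma^*_s(R)^c$, equivalently $pq = 0$ where $p = C(a)$ and $q = C(b)$. I would case-split on whether $p + q = 1$. If $p + q \neq 1$, set $c := p + q$: this is a nontrivial central projection, hence a vertex, and $C(c)p = p \neq 0$, $C(c)q = q \neq 0$, so $c$ is a common $\Gamma^*_s(R)$-non-neighbor of $a$ and $b$; moreover $c = a$ would force $p = C(a) = C(c) = c = p + q$ and hence $q = 0$, a contradiction, and similarly for $b$. If $p + q = 1$, use $|CP(R)| \geq 6$ to pick $h \in CP(R) \setminus \{0, 1, p, q\}$ and decompose $h = hp + hq$. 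If both summands are nonzero, take $c = h$; if exactly one vanishes, say $hq = 0$ so that $h < p$, take $c = 1 - h$ and verify $p(1-h) = p - h \neq 0$, $q(1-h) = q \neq 0$, while $c = a$ would force $h = 1 - p = q$ and $c = b$ would force $h = p$, both contradictions. Coupled with the trivial lower bound $diam(\Gamma^*_s(R)^c) \geq 2$ (since $e$ and $1 - e$ are adjacent in $\Gamma^*_s(R)$), this gives $diam(\Gamma^*_s(R)^c) = 2$.

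For the girth, set $g := 1 - e - f$, which is easily checked to be a third nontrivial central projection with $\{e, f, g\}$ pairwise orthogonal and summing to $1$. I claim the three vertices $1 - e, 1 - f, 1 - g$ form a triangle in $\Gamma^*_s(R)^c$: each is a nontrivial central projection, hence a vertex; they are mutually distinct (for instance $1 - e = 1 - g$ would give $e = g$ and hence $e = eg = 0$); and the pairwise products are $(1-e)(1-f) = g$, $(1-e)(1-g) = f$, $(1-f)(1-g) = e$, all nonzero, so the three are pairwise non-adjacent in $\Gamma^*_s(R)$, i.e., pairwise adjacent in $\Gamma^*_s(R)^c$. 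Together with the trivial lower bound $gr \geq 3$, this yields $gr(\Gamma^*_s(R)^c) = 3$. The main obstacle is organizing the diameter case analysis and verifying $c \neq a, b$ in each subcase without assuming $a, b$ are themselves projections; once adjacency is translated into orthogonality of central covers via the p.q.-Baer hypothesis, the remaining verifications reduce to Boolean arithmetic in $CP(R)$.
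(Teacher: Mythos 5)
Your proposal is correct. The diameter half is essentially the paper's own argument: the same split on whether $C(a)C(b)=0$, the same use of $g=C(a)+C(b)$ when it is a nontrivial projection, and the same auxiliary projection $h\notin\{0,1,C(a),C(b)\}$ when $C(a)+C(b)=1$, with $1-h$ substituted if one of the products vanishes; your explicit checks that the middle vertex differs from $a$ and $b$ are a welcome bit of care that the paper glosses over. The girth half is where you genuinely diverge. The paper splits into two cases according to whether $\Gamma^*_s(R)$ itself contains a triangle (via Theorem \ref{thm10}): when $gr(\Gamma^*_s(R))=3$ it exhibits the $3$-cycle $e\sim(e+f)\sim(1-f)\sim e$ in the complement, and in the triangle-free case it invokes Lemma \ref{lm4} to build the cycle $g\sim gh\sim h\sim g$. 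You instead extract once and for all a pair of nontrivial central projections $e,f$ with $ef=0$ and $e+f\neq 1$, complete it to the orthogonal decomposition $1=e+f+g$, and read off the triangle $\{1-e,1-f,1-g\}$; this is cleaner and in effect shows that the paper's second case is vacuous under the hypothesis. The one point you should make explicit is why such a pair $e,f$ exists: the proof of Theorem \ref{th6} produces it only in its branch where $\Gamma^*_s(R)$ is connected, so either note that $\Gamma^*_s(R)$ is indeed connected here (Corollary \ref{cor2} together with the standing assumption $|CP(R)|\geq 4$), or argue directly that the Boolean algebra of central projections, having at least six elements, must contain a chain $0<u<v<1$ and hence such a pair. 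With that remark supplied, both halves are sound.
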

	
	\begin{proof}
		If $\Gamma^*_s(R)^c$ is connected, by Theorem \ref{th6}, $|CP(R)\geq 6|$. Let $a, b \in V(\Gamma^*_s(R)^c)$. If $C(a)C(b)\neq 0$, then they are adjacent in $\Gamma^*_s(R)^c$. 
		
		If $C(a)C(b)=0$, then $g=C(a)+C(b)$ is a central projection in $R$. If $g<1$, then $aRg=aR\neq 0$ and $bRg=bR\neq 0$, therefore $g$ is adjacent to both $a$ and $b$ in $\Gamma^*_s(R)^c$, which gives $a\sim g \sim b$ is path of length $2$  in $\Gamma^*_s(R)^c$. Suppose $g=1$, let $f$ be a central projection in $R$ such that $f\notin \{0,1, C(a), C(b)\}$. If $C(a)f=0$, then $(1-f)C(a)\neq 0$ and $(1-f)C(b)\neq 0$ (by selection of $f$). Thus $a\sim (1-f) \sim b$	is a path of length $2$ in $\Gamma^*_s(R)^c.$ Similarly we get a path of length $2$ if $C(b)f=0$. If $C(a)f\neq 0$ and $C(b)f\neq 0$ then $a \sim f \sim b$ is a path of length $2$ in $\Gamma^*_s(R)^c$. Thus in any case $d(a, b)=2$, hence $diam(\Gamma^*_s(R)^c)=2$.
		
		Now we  prove $gr(\Gamma^*_s(R)^c)=3$.
		
		\noindent\textbf{Case i:} If $gr(\Gamma^*_s(R))=3$. by Theorem \ref{thm10}, $R$ contains two nontrivial central projections $e, f$ such that $ef=0$ and $e+f\neq 1$, which gives  $e(e+f)=e+ef=e\neq 0$, $(e+f)(1-f)=e-ef+f-f=1\neq 0$ and $(1-f)e\neq 0$. Thus $e\sim (e+f) \sim (1-f) \sim e$ is a $3$-cycle in $\Gamma^*_s(R)^c$, hence $gr(\Gamma^*_s(R)^c)=3$.
		
		\noindent\textbf{case ii:} If $gr(\Gamma^*_s(R))\neq 3$ i.e. $\Gamma^*_s(R)$ is triangle free. By Lemma \ref{lm4}, for nontrivial central projections $e, f\in R$, we have $ef=0$ if and only if $e+f=1$. Let $g, h\in CP(R)$ such that $g+h\neq 1$, then $gh\neq 0$. If $ gR(gh)^*=gRhg=ghR \neq 0 $, then $g\sim gh \sim h \sim g$ is a $3$-cycle in  $\Gamma^*_s(R)^c$. If $ gR(gh)^*=gRhg=ghR = \{0\}$, then $hg =0$ which is a contradiction to $hg \neq 0$. Hence, in both cases $gr(\Gamma^*_s(R)^c)=3$.
	\end{proof}
	For distinct vertices $a$ and $b$ of $G$, $a$ and $b$
	are said to be {\it{orthogonal}}, if $a$ and
	$b$ are adjacent, and they don't have any common neighbor; it is denoted by $a\perp b$. We say
	that  $G$ is {\it{complemented}}  if for each vertex $a$ of $G$,
	there is a vertex $b$ of $G$ (called a {\it{complement}} of $a$)
	such that $a\perp b$.
	\begin{lemma}\label{lm5}
		Let $R$ be a p.q.-Baer $*$-ring such that $\Gamma^*_s(R)^c$ is connected, then it is not complemented.
	\end{lemma}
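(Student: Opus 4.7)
The plan is to prove the stronger statement that no pair of distinct vertices of $\Gamma^*_s(R)^c$ is orthogonal, which immediately forces $\Gamma^*_s(R)^c$ to fail to be complemented. Throughout I use two standing facts: in a p.q.-Baer $*$-ring, two vertices $a, b$ are adjacent in $\Gamma^*_s(R)^c$ exactly when $C(a)C(b) \neq 0$; and by Theorem \ref{th6} the connectedness hypothesis gives $|CP(R)| \geq 6$, which, together with the fact that $CP(R)$ is a Boolean algebra under $e \wedge f = ef$, $e \vee f = e+f-ef$, and $\neg e = 1-e$, upgrades to $|CP(R)| \geq 8$. Every nontrivial central projection $e$ is automatically a vertex, since $r_R(eR) = (1-e)R \neq 0$.

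Fix two $\Gamma^*_s(R)^c$-adjacent vertices $a, b$; I aim to produce a common neighbor. The first case is $g := C(a)C(b) \notin \{C(a), C(b)\}$: then $g$ is a nontrivial central projection (nonzero by hypothesis, and $g=1$ would force $C(a)=C(b)=1$), and a short check verifies $g \neq a, b$ and $C(g)C(a) = C(g)C(b) = g \neq 0$, so $g$ itself is a common neighbor. Otherwise, after symmetry, $C(a) \leq C(b)$, and I try the candidates $c_1 := C(a)$ and $c_2 := C(b)$; each satisfies the adjacency conditions $C(c_i)C(a) \neq 0$ and $C(c_i)C(b) \neq 0$, so the only way for both to fail is that each coincides with $a$ or $b$. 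This leaves three residual configurations: (R1) $C(a)=C(b)=a$ with $a \in CP(R)$; (R2) $a, b \in CP(R)$ with $a \leq b$; and (R3) $C(a)=C(b)=b$ with $b \in CP(R)$.

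In each residual configuration the task reduces to exhibiting $e \in CP(R) \setminus \{0, 1, a, b\}$ with $eC(a) \neq 0$, which then automatically yields $eC(b) \neq 0$ because $C(a) \leq C(b)$. If $C(a)$ is not an atom of $L(CP(R))$, any atom $p \leq C(a)$ works; if $C(a)$ is an atom, the elements with $eC(a) \neq 0$ are precisely those in $[C(a), 1]$, and the Boolean decomposition gives $|[C(a), 1]| = |CP(R)|/2 \geq 4$, so at least one element of $[C(a), 1] \setminus \{C(a), 1, b\}$ survives. In (R1) and (R3) the non-central element among $\{a, b\}$ is automatically distinct from every central $e$, so the exclusion is costless. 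The main obstacle I anticipate is the bookkeeping across these residual sub-cases: one must verify, in each, that the candidate $e$ supplied by the Boolean algebra is genuinely distinct from both $a$ and $b$, and it is precisely the bound $|CP(R)| \geq 8$ provided by Theorem \ref{th6} that guarantees enough room to do so.
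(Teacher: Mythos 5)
Your argument is correct and follows the same basic strategy as the paper's proof: show that every pair of adjacent vertices of $\Gamma^*_s(R)^c$ has a common neighbor, so that no vertex admits a complement. The differences lie in the case division and in the level of care. The paper splits on whether $C(a)=C(b)$: when $C(a)\neq C(b)$ it offers $C(a)C(b)$ as the common neighbor, and when $C(a)=C(b)=e$ it picks $f\in CP(R)\setminus\{0,1,e,1-e\}$ and uses $f$ or $1-f$. You instead split on whether $C(a)C(b)\in\{C(a),C(b)\}$, and -- unlike the paper -- you explicitly verify that the proposed common neighbor is distinct from $a$ and $b$. This is not pedantry: in the paper's case $C(a)\neq C(b)$ the element $C(a)C(b)$ can coincide with $a$ (take $a=e$ a nontrivial central projection and $b$ with $C(b)=f>e$, so that $C(a)C(b)=ef=e=a$), and the paper's argument silently skips this possibility; your residual configurations (R1)--(R3) are exactly the repair of that omission, paid for by the upgrade $|CP(R)|\geq 8$, which you correctly extract from the Boolean-algebra structure of $L(CP(R))$ (a finite Boolean algebra has $2^n$ elements, so $\geq 6$ forces $\geq 8$). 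One phrase should be corrected: in the non-atom case you write that ``any atom $p\leq C(a)$ works,'' but $L(CP(R))$ need not be atomic, so such an atom may not exist; what you actually need, and what the negation of ``$C(a)$ is an atom'' literally provides, is a central projection $p$ with $0<p<C(a)$, and that element does the job in all three residual configurations (it is below $C(a)$, hence distinct from the central member of $\{a,b\}$, and it is central, hence distinct from the non-central one). With that rewording the argument is complete, and is in fact tighter than the published proof.
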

	\begin{proof}
		Suppose that $\Gamma^*_s(R)^c$ is connected, then $R$ contains at least $6$ central projections. Let $a,b$ are adjacent in $\Gamma^*_s(R)^c$.
		If $C(a)\neq C(b)$ then $C(a)C(b)\neq 0$ this gives $C(a)$ and $C(b)$ adjacent in $\Gamma^*_s(R)^c$.
		also $aC(a)C(b)=aC(b)\neq 0$ and $bC(a)C(b)=bC(a)\neq 0$ i.e. $C(a)C(b)$  is adjacent to both $a$ and $b$ in $\Gamma^*_s(R)^c$.
		If $C(a)=C(b)=e$, then choose central projection $f\notin \{0, 1, e, 1-e\}$.
		If $ef\neq 0$ then $e$ and $f$ are adjacent in $\Gamma^*_s(R)^c$ \textit{i.e.} $C(a)f\neq 0$ and $C(b)f\neq 0$ which gives $a\sim f \sim b$. Hence $f$ is adjacent to both $a$ and $b$ in $\Gamma^*_s(R)^c$. If $ef=0$, then $e$ and $f$ are non adjacent in $\Gamma^*_s(R)^c$, \textit{i.e.} $e(1-f)=e\neq 0$ implies $a(1-f)\neq 0$ and  $b(1-f)\neq 0$. Therefore $a \sim (1-f) \sim b$ in $\Gamma^*_s(R)^c$. Hence, for any adjacent vertex $a$ and $ b$ in $\Gamma^*_s(R)^c$, there is a vertex that is adjacent to both $a$ and $b$. Thus, $\Gamma^*_s(R)^c$ is not complemented.
	\end{proof}
	
	\begin{theorem}
		Let $R$ be a p.q.-Baer $*$-ring and there is an element $a\in V(\Gamma^*_s(R))$ be such that $a\neq C(a)$. Then $\Gamma^*_s(R)^c$ is complemented if and only if $R=\mathbb{Z}_3\oplus \mathbb{Z}_3.$
	\end{theorem}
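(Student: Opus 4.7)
My plan is to combine Lemma \ref{lm5}, Theorem \ref{th6}, and Proposition \ref{pro2} to force $|CP(R)|=4$, and then translate the orthogonality condition into a cardinality count that pins $R$ down. For the forward direction, suppose $\Gamma^*_s(R)^c$ is complemented. Lemma \ref{lm5} says that if $\Gamma^*_s(R)^c$ is connected then it is not complemented, so $\Gamma^*_s(R)^c$ must be disconnected; Theorem \ref{th6} (contrapositive), together with the standing hypothesis $|CP(R)|\geq 4$ of this section, then forces $|CP(R)|=4$, say $CP(R)=\{0,1,e,1-e\}$. By Proposition \ref{pro2}, $\Gamma^*_s(R)$ is complete bipartite with parts $C_e$ and $C_{1-e}$; the p.q.-Baer annihilator identity $r_R(vR)=(1-C(v))R$ shows that $v\in V(\Gamma^*_s(R))$ iff $C(v)\neq 1$, so $V(\Gamma^*_s(R))=C_e\cup C_{1-e}$ exactly. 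Therefore $\Gamma^*_s(R)^c$ is the disjoint union of the two complete graphs $K_{|C_e|}$ and $K_{|C_{1-e}|}$.

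Next I will use the elementary fact that $K_n$ is complemented if and only if $n=2$: when $n=1$ there is no edge at all, and when $n\geq 3$ every pair of vertices shares a common neighbour so no two vertices are orthogonal. This forces $|C_e|=|C_{1-e}|=2$; the hypothesis that some vertex $a$ satisfies $a\neq C(a)$ is consistent with this by supplying at least one non-projection element in $C_e$ or $C_{1-e}$. Decomposing $R=R_1\oplus R_2$ with $R_1=eR$ and $R_2=(1-e)R$, the involution must act componentwise, since an exchange involution would give $e^*=1-e\neq e$, impossible for a projection. From $|CP(R)|=|CP(R_1)|\cdot|CP(R_2)|=4$ with each factor at least $2$ we conclude $CP(R_i)=\{0,1\}$, so every nonzero $a_i\in R_i$ has central cover $1_{R_i}$, giving $|C_e|=|R_1|-1=2$ and hence $|R_1|=3$; symmetrically $|R_2|=3$. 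Since the only unital ring of order three is $\mathbb{Z}_3$ (and its only involution is the identity), $R\cong\mathbb{Z}_3\oplus\mathbb{Z}_3$.

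For the converse, when $R=\mathbb{Z}_3\oplus\mathbb{Z}_3$ with componentwise identity involution, one checks directly that $V(\Gamma^*_s(R))=\{(1,0),(2,0),(0,1),(0,2)\}$ and $\Gamma^*_s(R)\cong K_{2,2}$, so $\Gamma^*_s(R)^c$ is a disjoint union of two copies of $K_2$; in each $K_2$ the two vertices are adjacent and admit no common neighbour (no third vertex of $\Gamma^*_s(R)^c$ is joined to either of them), so they are orthogonal and every vertex has a complement. The main obstacle I foresee is the clean identification $V(\Gamma^*_s(R))=C_e\cup C_{1-e}$ in the $|CP(R)|=4$ case — ruling out vertices with $C(v)=1$ rests on the p.q.-Baer annihilator identity — after which the conversion of $|C_e|=2$ into $|R_1|=3$ is the only remaining subtlety, and that is handled cleanly by the direct-summand decomposition induced by $e$.
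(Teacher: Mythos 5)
Your proof is correct and follows essentially the same route as the paper: complementedness forces $\Gamma^*_s(R)^c$ to be disconnected via Lemma \ref{lm5}, hence $|CP(R)|=4$ via Theorem \ref{th6}, the bipartition into $C_e$ and $C_{1-e}$ then forces $|C_e|=|C_{1-e}|=2$, and the direct-sum decomposition $R=eR\oplus(1-e)R$ with $|eR|=|(1-e)R|=3$ yields $\mathbb{Z}_3\oplus\mathbb{Z}_3$. Your component-wise analysis of $K_{|C_e|}\sqcup K_{|C_{1-e}|}$ is in fact slightly more complete than the paper's, since it also disposes of the asymmetric case $|C_e|=1$, $|C_{1-e}|=2$, which the paper passes over silently.
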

	
	\begin{proof}
		If $R=\mathbb{Z}_3\oplus \mathbb{Z}_3$, then $\Gamma^*_s(R)$ is a $4$-cycle whose graph complement is complemented. Conversely, suppose $\Gamma^*_s(R)^c$ is complemented. Since $a\in V(\Gamma^*_s(R))$ be such that $a\neq C(a)$, therefore $a$ is not adjacent ot $C(a)$. Hence $\Gamma^*_s(R)$ is not complete. By Lemma \ref{lm5}, $\Gamma^*_s(R)^c$ is disconnected. Therefore, $R$ contains exactly four central projections. Let $CP(R)=\{0,1,e, 1-e\}$,  $C_e =\{x \in R ~|~ C(x) = e\}$ and $C_{1-e}=\{x \in R ~|~ C(x) = 1-e\}$. If $|C_e|>2$,  then any three elements in $C_e$ are connected in $\Gamma^*_s(R)^c$, hence no element of $C_e$ has complement, a contradiction. Therefore $|C_e|\leq 2$. Similarly $|C_{1-e}|\leq 2$. If $|C_e|=|C_{1-e}|=1$, then $\Gamma^*_s(R)$ is a complete graph on two vertices, a contradiction to the assumption. Therefore  $|C_e|=|C_{1-e}|=2$. As $e$ is a central projection, $R=eR\oplus (1-e)R$. Where $|eR|=|(1-e)R|=3$ (as $C_e=eR\backslash \{0\}$ and $C_{1-e}=(1-e)R\backslash \{0\} $). Therefore $R=\mathbb{Z}_3\oplus \mathbb{Z}_3$.
	\end{proof}


\begin{thebibliography}{99}
		\bibitem{Abrams}G. Abrams and  G. A. Pino, The Leavitt path algebra of a graph, \textit{J. Algebra}, \textbf{293}(2) (2005),  319-334.	
		
		\bibitem{saam1} S. Akbari and A. Mohammadian, On the zero-divisor graph of a commutative ring, \textit{J. Algebra}, \textbf{274} (2004), 847-855.
		
		\bibitem{saam} S. Akbari and A. Mohammadian, Zero-divisor graphs of non-commutative rings, \textit{J. Algebra}, \textbf{296} (2006), 462-479.
		
		\bibitem{ddn} D. D. Anderson  and M. Naseer,  Beck's coloring of a commutative ring, \textit{J. Algebra}, \textbf{159} (1993), 500-514.
		
		
		
		\bibitem{DP}D. F. Anderson  and P. S. Livingston, The zero-divisor graph of a commutative ring, \textit{J. Algebra}, \textbf{217} (1999), 434-447.	
		
		\bibitem{Asir} T. Asir, A. Kumar and A. Mehdi, On the zero-divisor hypergraph of a reduced ring. \textit{Acta Math. Hung.}, \textbf{170}(2) (2023), 510-523.
		
		\bibitem{Axtell1} M. Axtell and J. Stickles and W. Trampbachls, Zero-divisor ideals and realizable zero-divisor graphs, \textit{Involve}, \textbf{2} (2009), 17–27.
		
		\bibitem {Axtell2} M. Axtell, N. Baeth  and J. Stickles, Cut vertices in zero-divisor graphs of finite commutative rings, \textit{Comm. Algebra}, \textbf{39}(6) (2011), 2179-2188.
		
		\bibitem{B} I. Beck, Coloring of commutative rings, \textit{J. Algebra} \textbf{116} (1988), 208-226.
		
		\bibitem{Behboodi2} M. Behboodi and  R. Beyranvand, Strong zero-divisor graphs of non-commutative rings, \textit{Int. J. Algebra}, \textbf{2} (2008), 25-44.
		
		\bibitem{Behboodi1}  M. Behboodi, R. Beyranvand  and H. Khabazian, Strong zero-divisors of non-commutative rings, \textit{J. Algebra Appl.}, \textbf{8} (2009), 565-580.
		
		\bibitem{skb} S. K. Berberian, {\it  Baer $*$-Rings},  Springer-Verlag, Berlin and New York, (1972).
		
		\bibitem{Bir4}  G. F. Birkenmeier, J. K. Park and S. T. Rizvi, {\em Extensions of Rings and Modules} (New York, Birkh\"{a}user, (2013)).
		
		\bibitem{Bir2} G. F. Birkenmeier, J. Y. Kim, and J. K. Park,  Polynomial extensions of Baer and quasi-Baer rings, \textit{J. Pure Appl. Algebra}, \textbf{159}(1) (2001), 25-42.
		
		\bibitem {Lia} R. Hazrat, L. Va\v{s},  Baer and Baer $*$-ring characterizations of Leavitt path algebra,\textit{ J. Pure Appl. Algebra}, \textbf{222} (2018), 39-60.	
		
		\bibitem {Anil4}  A. Khairnar  and  B. N. Waphare, A sheaf representation of principally quasi-Baer $*$-rings, \textit{Algebras Represent. Theory}, \textbf{22} (2018), 79-97. 
		
		\bibitem {Anil} A. Khairnar and  B. N. Waphare, Unitification of weakly p.q.-Baer $*$-rings, \textit{Southeast Asian Bull. Math.}, \textbf{ 42} (2018), 387-400.
		
		\bibitem{NMK1} N. Kumbhar, A. Khairnar and B. N. Waphare, Strong zero-divisor graph of rings with involution, \textit{Asian-Eur. J. Math.}, \textbf{16}(10) (2023), 2350179 (14 pages).
		
		
		\bibitem{Pat1}  A. Patil and B. N. Waphare, The zero-divisor graph of a ring with involution, \textit{J. Algebra Appl.}, \textbf{17}(3) (2018) 1850050 (17 pages). 
		
		\bibitem {Pat2} A. Patil and B. N. Waphare, On the zero-divisor graph of Rickart $*$-rings, \textit{Asian-Eur. J. Math.}, \textbf{10} (2017), 1750015 (17 pages).
		
		\bibitem{spr} S. P. Redmond, The zero-divisor graph of a non-commutative ring, \textit{Int. J. Commut. Rings}, \textbf{1}(4) (2002), 203-211.
		
		
		\bibitem{spr2} S. P. Redmond, Cut vertices and degree one vertices of zero-divisor graphs, \textit{Comm. Algebra} \textbf{40}(8) (2012), 2749-2756.
		
		\bibitem{nkbn2} N. K. Thakare and B. N. Waphare, Baer $*$-rings with finitely many elements, \textit{J. Combin. Math. Combin. Comput.}, \textbf{26} (1998), 161-164.
		
		\bibitem{Visweswaran1}   S. Visweswaran, Some results on the complement of the zero-divisor graph of a commutative ring, \textit{J. Algebra Appl.}, \textbf{10}(3) (2011), 573-595. 
		
	
		\bibitem{dw} D. B. West, {\it Introduction to Graph Theory}, Second Edition, Prentice-Hall of India, New Delhi (2002).
		
	\end{thebibliography}
\end{document}